\newcommand{\Cc}{\mathbb{C}}
\newcommand{\colspan}{\mathrm{colspan}}
\newcommand{\dd}{\mathrm{d}}
\newcommand{\err}{\epsilon}
\newcommand{\et}{\tilde{\err}}
\newcommand{\geqs}{\geqslant}
\newcommand{\Hh}{\underline{H}}
\newcommand{\hhat}{\widehat{h}}
\newcommand{\Hhat}{\widehat{H}}
\newcommand{\Hht}{\underline{\tilde{H}}}
\newcommand{\Ht}{\tilde{H}}
\newcommand{\Kk}{\mathcal{K}}
\newcommand{\leqs}{\leqslant}
\newcommand{\Matlab}{\textsc{Matlab}}
\newcommand{\Pe}{\textit{P\!e}}
\newcommand{\Rr}{\mathbb{R}}
\newcommand{\Rrnn}{\Rr^{n\times n}}
\newcommand{\Span}{\mathrm{span}}
\newcommand{\uhat}{\widehat{u}}
\newcommand{\ut}{\tilde{u}}
\newcommand{\vhat}{\widehat{v}}
\newcommand{\Vhat}{\widehat{V}}
\title{Residual, restarting and Richardson iteration\\
       for the matrix exponential, revised}
\author{Mike A. Botchev\thanks{Department of Applied Mathematics,
University of Twente, P.O.~Box 217, 7500~AE Enschede, the Netherlands,
\texttt{mbotchev@na-net.ornl.gov}.}}
\date{\today}
\begin{document}
\maketitle

\centerline{\emph{To the memory of my father}}

\begin{abstract}
A well-known problem in computing some matrix functions iteratively 
is the lack of a clear, commonly accepted residual notion.
An important matrix function for which this is the case is the matrix 
exponential.  Suppose the matrix exponential of a given matrix times a given 
vector has to be computed.  We develop the approach of Druskin, Greenbaum and
Knizhnerman (1998) and interpret the sought-after vector as
the value of a vector function satisfying the linear system of ordinary
differential equations (ODE) whose coefficients form the given matrix.
The residual is then defined with respect to the initial-value problem 
for this ODE system.  The residual introduced in this way can be seen as 
a backward error.  We show how the residual can be computed efficiently 
within several iterative methods for the matrix exponential.
This completely resolves the question of reliable stopping criteria 
for these methods.  Further, we show that the residual concept
can be used to construct new residual-based iterative methods.
In particular, a variant of the Richardson method for the new residual 
appears to provide an efficient way to restart Krylov subspace methods
for evaluating the matrix exponential.
\end{abstract}

\begin{keywords}
matrix exponential, residual, Krylov subspace methods, restarting, 
Chebyshev polynomials, stopping criterion, Richardson iteration, 
backward stability, matrix cosine
\end{keywords}

\begin{AMS}
65F60, 65F10, 65F30, 65N22, 65L05
\end{AMS}

\section{Introduction}
Matrix functions, and particularly the matrix exponential, have been an 
important tool in scientific computations for decades (see e.g.\
\cite{Gantmacher,GolVanL,Higham_bookFM,FrommerSimoncini08a,AlmohyHigham2010}).  
The lack of a clear notion for a residual for many matrix functions
has been a known problem in iterative computation of matrix functions
\cite{BenziRazouk07,FrommerSimoncini08a,EshofHochbruck06}.
Although it is possible to define a residual for some matrix functions such as 
the inverse or the square root, for many important matrix functions including 
the matrix exponential, sine and cosine, no natural notion for residuals
seems to exist.  


Assume for given $A\in\Rrnn$, such that $A+A^*$ is 
positive semidefinite, and $v\in\Rr^n$ the vector 
\begin{equation}
\label{expm0}
y=\exp(-A)v
\end{equation}
has to be computed.  The question is how to evaluate the quality of 
an approximate solution 
\begin{equation}
\label{expm}
y_k\approx\exp(-A)v,
\end{equation}
where $k$ refers to the number of steps (iterations) needed to construct $y_k$.
We interpret the vector $y$ as the value of a vector function $y(t)$ at $t=1$
such that
\begin{equation}\label{ivp}
y'(t) = -Ay(t),\quad y(0)=v.
\end{equation}
The exact solution of this initial-value problem (IVP) is given by
\begin{equation*}
\label{expmt}
y(t)=\exp(-tA)v.
\end{equation*}
Assuming now that there is a vector function $y_k(t)$ such that $y_k(1)=y_k$,
we define the residual for $y_k(t)\approx y(t)$ as
\begin{equation}
\label{resid}
r_k(t) \equiv -Ay_k(t)-y_k'(t).
\end{equation}
The key point in this residual concept is that $y=\exp(-A)v$
is seen not as a problem on its own but rather as the exact solution
formula for the problem~\eqref{ivp}.  The latter provides the equation
where the approximate solution is substituted to yield the residual.
We illustrate this in Table~\ref{t1}, where the introduced matrix exponential
residual is
compared against the conventional residual for a linear system $Ay=v$.
As can be seen in the Table, the approximate solution
satisfies a perturbed IVP, where the perturbation is the residual.
Thus, the introduced residual can be seen as a backward error
(see Section~\ref{sect:err_est} for residual-based error estimates).
If one is interested in computing the matrix
exponential $\exp(-A)$ itself, then the residual can be defined
with respect to the matrix IVP
$$
X'(t)=-A X(t), \quad X(0) = I,
$$ 
with the exact solution $X(t)=\exp(-tA)$.
Checking the norm of $r_k(t)$ in~\eqref{resid} is proposed as a possible 
stopping criterion of Krylov subspace iterations 
first in~\cite{DruskinGreenbaumKnizhnerman98} and more recently
for a similar matrix function in~\cite{KnizhnermanSimoncini09}.

\begin{table}
\caption{The linear system and matrix exponential residuals.  In both
cases the sought-after vector is $f(A)v$, with either $f(x)=1/x$ 
or $f(x)=\exp(-x)$.  The error is defined as the exact solution minus
the approximate solution.}\label{t1}
\begin{center}
\begin{tabular}{cccl}\hline\hline
$f(x)$              & $1/x$     &~~~~  & \hspace*{4em}$\exp(-x)$          \\
\hline
exact solution $y$  & $y=A^{-1}v$       & &  $\begin{aligned}        
                                              &\text{define }y(t)=\exp(-tA)v,\\ 
                                              &\text{set }y:=y(1)
                                              \end{aligned}$   \\
\hline
residual equation  & $Ay=v$            & &$\left\{ \begin{aligned}
                                           y'(t)&= -Ay(t)\\      
                                           y(0) &= v            
                                           \end{aligned}\right.$ \\
\hline
residual for $y_k\approx y$&$r_k=v-Ay_k$& & $r_k(t)=-Ay_k(t)-y_k'(t)$\\ 
\hline
\begin{tabular}{c}
mapping\\
error $\err_k$ $\rightarrow$ residual
$r_k$\end{tabular}         &  $r_k=A\err_k$&  & $\left\{\begin{aligned}
                                             r_k(t) &= \err'_k(t)+A\err_k(t)\\
                                             \err_k(0)  &= 0
                                             \end{aligned}\right.$\\
\hline
\begin{tabular}{c}
perturbed problem\\
(backward stability)
\end{tabular}              &$Ay_k=v-r_k$& &$\left\{ \begin{aligned}
                                           y'_k(t)&= -Ay_k(t)-r_k(t)\\      
                                           y_k(0) &= v            
                                           \end{aligned}\right.$ \\
\hline
\end{tabular}
\end{center} 
\end{table}

The contribution of this paper is twofold.  
First, it turns out that the residual~\eqref{resid} can be efficiently computed 
within several iterative methods for matrix exponential evaluation.
We show how this can be done in several popular Krylov subspace 
and Chebyshev polynomial methods for computing $\exp(-A)v$.
Second, we show how the residual notion leads to new algorithms to compute the 
matrix exponential.  Two basic Richardson-like iterations are proposed and
discussed.  When combined with Krylov subspace methods, one of them can be seen 
as an efficient way to restart the Krylov subspace methods.
Furthermore, this approach for the matrix exponential residual can be readily
extended to the sine and cosine matrix functions (see the conclusion section).

The equivalence between problems~\eqref{expm} and~\eqref{ivp} has been widely used in 
numerical literature and computations.
In addition to already cited work~\cite{DruskinGreenbaumKnizhnerman98,KnizhnermanSimoncini09}
see e.g.\ the very first formula in \cite{19ways} or \cite[Section~10.1]{Higham_bookFM}.
Moreover, methods for solving~\eqref{expm} are applied to~\eqref{ivp} 
(for instance, exponential time integrators \cite{HochLubSel97,HochbruckOstermann2010}) and vice 
versa~\cite[Section~4]{19ways}.
In~\cite{EshofHochbruck06}, van den Eshof and Hochbruck represent
the error $\err_k(t)\equiv y(t)-y_k(t)$ as the solution of the IVP 
$\err_k'(t)=-A\err_k(t)+r_k(t)$, $\err_k(0)=0$
and obtain an explicit, non-computable expression for $\err_k(t)$.  This allows them
to justify a stopping criterion for their shift-and-invert Lanczos algorithm, based
on stagnation of the approximations.  
Although being used, especially in the field of numerical 
ODEs (see e.g.\ 
\cite{Enright2000,Shampine2005,Lubich2008book,KierzenkaShampine2008}), 
the exponential residual~\eqref{resid} does seem to have a potential
which has not been fully exploited yet, in particular in
matrix computations.  Our paper aims at filling this gap.

The paper is organized as follows.  Section~\ref{sect:Krylov} is 
devoted to the matrix exponential residual within Krylov subspace
methods.  In Section~\ref{sect:Chebyshev} we show how the Chebyshev
iterations can be modified to adopt the residual control.
Section~\ref{sect:err_est} presents some simple residual-based
error estimates.  Richardson iteration for the matrix exponential
is the topic of Section~\ref{sect:Richardson}.
Numerical experiments are discussed in
Section~\ref{sect:num_exp}, and conclusions are drawn in the 
last section.

Throughout the paper, unless noted otherwise, $\|\cdot\|$ denotes the Euclidean
vector 2-norm or the corresponding induced matrix norm.

\section{Matrix exponential residual in Krylov subspace methods}
\label{sect:Krylov}
Kry\-lov subspace methods have become an important tool for computing matrix
functions (see e.g.~\cite{Henk:f(A),DruskinKnizh89,Knizh91,GallSaad92,Saad92,DruskinKnizh95,%
HochLub97,DruskinKnizh98,HochLubSel97}).
For $A\in\Rrnn$ and $v\in\Rr^n$ given, the Arnoldi process yields, after $k$ steps, 
vectors $v_1$, \dots, $v_{k+1}\in\Rr^n$ that are orthonormal in exact arithmetic 
and span the Krylov subspace $\Kk_k(v,Av,\dots,A^{k-1}v)$ (see 
e.g.~\cite{GolVanL,SaadBook,Henk:book}).  
If $A=A^*$, the Lanczos process is usually 
used instead of Arnoldi.  Together with the basis vectors $v_j$, the Arnoldi or 
Lanczos processes deliver an upper-Hessenberg matrix 
$\Hh_k\in\Rr^{(k+1)\times k}$, 
such that the following relation 
holds~\cite{GolVanL,SaadBook,Henk:book}:
\begin{equation}
\label{Arnoldi}
\begin{aligned}
AV_k &= V_{k+1}\Hh_k,\quad\text{or equivalently,}\\
AV_k &= V_kH_k +   h_{k+1,k}v_{k+1}e_k^T,
\end{aligned}
\end{equation}
where $V_k\in\Rr^{n\times k}$ has columns $v_1$, \dots, $v_k$,
$H_k\in\Rr^{k\times k}$ is the matrix $\Hh_k$ without the last row 
$(0, \cdots, 0,  h_{k+1,k})$, and $e_k=(0, \cdots , 0 , 1)^T\in\Rr^k$. 
The first basis vector $v_1$ is the normalized vector $v$: $v_1=v/\|v\|$.

\subsection{Ritz-Galerkin approximation}
An approximation $y_k$ to the matrix exponential $y=\exp(-A)v$ 
is usually computed as $y_k(1)$, with
\begin{equation}
\label{Krylov1}
y_k(t) = V_k\exp(-t H_k)(\beta e_1),
\end{equation}
where $\beta=\|v\|$ and $e_1=(1,0, \dots, 0)^T\in\Rr^k$. 
An important property of the Krylov subspace is its scaling invariance: 
application of the Arnoldi process to $tA$, $t\in\Rr$, results in 
the upper-Hessenberg matrix 
$t\Hh_k$, and the basis vectors $v_1$, \dots, $v_{k+1}$ are independent of $t$.
It is convenient for us to write 
\begin{equation}
\tag{\ref{Krylov1}$'$}
y_k(t) = V_ku_k(t), \quad u_k(t)\equiv\exp(-t H_k)(\beta e_1),
\end{equation}
with $u_k(t):\Rr\rightarrow\Rr^k$ being the solution of the projected IVP
\begin{equation}
\label{proj_ivp}
u_k'(t)=-H_ku_k(t),\quad u_k(0)=\beta e_1.  
\end{equation}
The following simple Lemma (cf.\ \cite[formula~(29)]{DruskinGreenbaumKnizhnerman98})
provides an explicit  expression for the residual.

\begin{lemma}\label{lemma1}
Let $y_k(t)\approx y(t)=\exp(-tA)v$ be the Krylov subspace approximation given 
by~(\ref{Krylov1}).  Then for any $t\geqs 0$ the residual $r_k(t)$ 
for $y_k(t)\approx y(t)$ is 
$$
\begin{aligned}
  r_k(t)     &= -\beta h_{k+1,k} e_k^T\exp(-tH_k)e_1 v_{k+1}, \\
  \|r_k(t)\| &= |\beta h_{k+1,k} e_k^T\exp(-tH_k)e_1|=|h_{k+1,k}[u_k(t)]_k|,
\end{aligned}
$$
where $[u_k(t)]_k$ is the last entry of the vector function $u_k(t)$ defined
in~(\ref{Krylov1}$'$).
\end{lemma}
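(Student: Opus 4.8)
The plan is to substitute the Krylov approximation $y_k(t)=V_ku_k(t)$ directly into the residual definition~\eqref{resid} and simplify using the Arnoldi relation~\eqref{Arnoldi}. Since $r_k(t)=-Ay_k(t)-y_k'(t)$, I need explicit expressions for both terms in terms of the Krylov quantities $V_k$, $H_k$ and the projected solution $u_k(t)$.

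First I would compute $y_k'(t)$. Differentiating $y_k(t)=V_ku_k(t)$ and invoking the projected IVP~\eqref{proj_ivp}, which gives $u_k'(t)=-H_ku_k(t)$, yields $y_k'(t)=V_ku_k'(t)=-V_kH_ku_k(t)$. Next I would compute $Ay_k(t)=AV_ku_k(t)$ using the second form of the Arnoldi relation, $AV_k=V_kH_k+h_{k+1,k}v_{k+1}e_k^T$. This splits $Ay_k(t)$ into an in-subspace part $V_kH_ku_k(t)$ and a rank-one correction along $v_{k+1}$, namely $h_{k+1,k}v_{k+1}\bigl(e_k^Tu_k(t)\bigr)$.

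The key step is the cancellation: forming $-Ay_k(t)-y_k'(t)$, the terms $-V_kH_ku_k(t)$ and $+V_kH_ku_k(t)$ cancel exactly, leaving $r_k(t)=-h_{k+1,k}v_{k+1}\bigl(e_k^Tu_k(t)\bigr)$. To match the stated form I would observe that $e_k^Tu_k(t)=[u_k(t)]_k$ is the last entry of $u_k(t)$, and that by~(\ref{Krylov1}$'$) this equals $\beta e_k^T\exp(-tH_k)e_1$. The identity is purely algebraic and therefore holds for all $t$.

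Finally, the norm formulas follow immediately from orthonormality: since $v_{k+1}$ has unit Euclidean norm, $\|r_k(t)\|$ is the absolute value of the scalar coefficient, giving both $|\beta h_{k+1,k}e_k^T\exp(-tH_k)e_1|$ and the equivalent $|h_{k+1,k}[u_k(t)]_k|$. I do not expect any genuine obstacle, as the result is a one-line calculation once the Arnoldi relation is applied; the only care needed is in the sign bookkeeping and the identification of the scalar last-entry factor. The essential mechanism is that the Ritz--Galerkin nature of the approximation forces the in-subspace part of $Ay_k$ to agree with $-y_k'$, so that the residual reduces to the single out-of-subspace component along $v_{k+1}$.
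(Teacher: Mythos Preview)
Your proposal is correct and follows essentially the same route as the paper: differentiate $y_k(t)$ via the projected IVP, expand $Ay_k(t)$ using the Arnoldi relation, observe the cancellation of the $V_kH_k$ terms, and read off the scalar multiple of $v_{k+1}$. The only cosmetic difference is that you phrase the computation in terms of $u_k(t)$ whereas the paper writes out $\exp(-tH_k)(\beta e_1)$ directly, and you spell out the norm step via $\|v_{k+1}\|=1$, which the paper leaves implicit.
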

\begin{proof}
It follows from~\eqref{Krylov1} that $y_k'(t)=-V_kH_k\exp(-t H_k)(\beta e_1)$.
From the Arnoldi relation~\eqref{Arnoldi} we have 
$$
Ay_k(t)=AV_k\exp(-t H_k)(\beta e_1) 
= (V_kH_k + h_{k+1,k}v_{k+1}e_k^T)\exp(-t H_k)(\beta e_1),
$$
which yields the result:
$$
r_k(t)=-Ay_k(t)-y_k'(t)= -h_{k+1,k}v_{k+1}e_k^T\exp(-t H_k)(\beta e_1).
$$
\end{proof}

Note that the Krylov subspace approximation~\eqref{Krylov1} satisfies the initial 
condition $y_k(0)=v$ by construction:
$$
y_k(0) = V_k(\beta e_1)= \beta v_1 = v.
$$
Thus, there is no danger that the residual $r_k(t)=-Ay_k(t)-y_k'(t)$ is small in norm
for some $y_k(t)$ approaching a solution of the ODE system $y'=Ay$ with other
initial data.

The residual notion~\eqref{resid} allows us to see~\eqref{Krylov1}
as the Ritz-Galerkin approximation: the residual vector $r_k(t)$ is orthogonal,
for any $t\geqs 0$, to the search space $\Span(v_1,\dots,v_k)$:
\begin{equation}
\label{Galerkin}
V_k^*r_k(t)=V_k^*(-Ay_k(t)-y_k'(t))=V_k^*(-AV_ku_k(t)-V_ku_k'(t))=
-H_ku_k(t)-u_k'(t)=0.
\end{equation}
Here we used the relation $V_k^*AV_k=H_k$, which follows from~\eqref{Arnoldi}
if $V_k$ is orthonormal (this may not always be the case in 
floating point arithmetic).

The residual $r_k(t)$ turns out to be closely related to the 
so-called \emph{generalized residual} $\rho_k(t)$~\cite{HochLubSel97}.
Following~\cite{HochLubSel97} (see also~\cite{Saad92}), we can write
\begin{align*}
y_k(t)&=\beta V_k \exp(-tH_k)e_1 &&= 
\frac1{2\pi i}\oint_{\Gamma}e^\lambda V_k(\lambda I+tH_k)^{-1}\beta e_1\dd\lambda,
\\
y(t)  &=\exp(-tA)v &&= 
\frac1{2\pi i}\oint_{\Gamma}e^\lambda (\lambda I+tA)^{-1}v\dd\lambda,
\end{align*}
where $\Gamma$ is a closed contour in $\Cc$ encircling the spectrum of $A$.
Thus, $y_k(t)$ is an approximation to $y(t)$ where the resolvent inverse
$(\lambda I+tA)^{-1}v$ is approximated by $k$ steps of the fully orthogonal
method (FOM):
$$
\err_k = y(t)-y_k(t) = \frac1{2\pi i}\oint_{\Gamma}e^\lambda \mathrm{error}_k^{\mathrm{FOM}}
\dd\lambda.
$$
Since the FOM error is unknown, the authors of~\cite{HochLubSel97} 
replace it by the known FOM residual, which is 
$\beta (-th_{k+1,k})v_{k+1}e_k^T(\lambda I+tH_k)^{-1}e_1$.
This leads to the generalized residual
\begin{equation}
\label{gen_resid}
\begin{aligned}
\rho_k(t) &\equiv\frac{1}{2\pi i}
\oint_\Gamma e^\lambda\beta (-th_{k+1,k})v_{k+1}e_k^T(\lambda I+tH_k)^{-1}e_1\dd\lambda\\
&= -\beta t h_{k+1,k}e_k^T\exp(-t H_k)e_1 \, v_{k+1},
\end{aligned}
\end{equation}
which coincides, up to a factor $t$, with our matrix exponential residual $r_k(t)$.
For the generalized residual, this provides a justification which is
otherwise lacking: strictly speaking, there is no reason why the error 
in the integral expression above can be replaced by the residual.
In Section~\ref{sect:Krylov_tests}, a numerical test is presented
to compare stopping criteria based on $r_k(t)$ and $\rho_k(t)$.

\subsection{Shift-and-invert Arnoldi/Lanczos approximations}
In the shift-and-invert (SaI) Arnoldi/Lanczos 
approximations~\cite{MoretNovati04,EshofHochbruck06} the Krylov subspace is
built up with respect to the matrix $(I+\gamma A)^{-1}$, with $\gamma>0$
being a parameter, so that the Krylov basis matrix $V_{k+1}\in\Rr^{n\times(k+1)}$ and 
an upper-Hessenberg matrix $\Hht_k\in\Rr^{(k+1)\times k}$ are built such that 
(cf.~\eqref{Arnoldi})
\begin{equation}
\label{Arnoldi_sai}
\begin{aligned}
(I+\gamma A)^{-1}V_k &= V_{k+1}\Hht_k,\quad\text{or, equivalently,}\\
(I+\gamma A)^{-1}V_k &= V_k\Ht_k + \tilde{h}_{k+1,k}v_{k+1}e_k^T,
\end{aligned}
\end{equation}
where $\Ht_k\in\Rr^{k\times k}$ is the first $k$ rows of $\Hht_k$.
The approximation $y_k(t)\approx\exp(-tA)v$ is then computed as given by~\eqref{Krylov1},
with $H_k$ defined as~\cite{EshofHochbruck06}
\begin{equation}
\label{Arnoldi_saiH}
H_k = \frac1\gamma(\Ht_k^{-1}-I).
\end{equation}
Relation~\eqref{Arnoldi_sai} can be rewritten as 
(cf.\ formula~(4.1) in \cite{EshofHochbruck06})
\begin{equation}
\label{Arnoldi_sai2}
AV_k = V_kH_k - \frac{\tilde{h}_{k+1,k}}{\gamma}(I+\gamma A)v_{k+1}e_k^T\Ht_k^{-1},
\end{equation}
which leads to the following lemma.

\begin{lemma}\label{lemma2}
Let $y_k(t)\approx y(t)=\exp(-tA)v$ be the SaI Krylov subspace 
approximation~(\ref{Krylov1}),
with $H_k$ defined in (\ref{Arnoldi_saiH}).  Then for any $t\geqs 0$
the residual $r_k(t)$ for $y_k(t)\approx y(t)$ is 
$$
\begin{aligned}
  r_k(t)     &= \beta\,\frac{\tilde{h}_{k+1,k}}{\gamma}\, e_k^T\Ht_k^{-1}\exp(-tH_k)e_1 (I+\gamma A)v_{k+1}, \\
  \|r_k(t)\| &\leqs \beta\left|\frac{\tilde{h}_{k+1,k}}{\gamma}\right| 
             | e_k^T\Ht_k^{-1}\exp(-tH_k)e_1| (1+\gamma\|A\|).
\end{aligned}
$$
\end{lemma}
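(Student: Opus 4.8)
The plan is to mirror the proof of Lemma~\ref{lemma1}, replacing the standard Arnoldi relation~\eqref{Arnoldi} by its shift-and-invert counterpart~\eqref{Arnoldi_sai2}. First I would differentiate the approximation~\eqref{Krylov1}: exactly as in Lemma~\ref{lemma1}, $y_k'(t) = -V_kH_k\exp(-tH_k)(\beta e_1)$, where now $H_k$ is the SaI matrix from~\eqref{Arnoldi_saiH}. Next I would evaluate $Ay_k(t) = AV_k\exp(-tH_k)(\beta e_1)$ and substitute the expression for $AV_k$ supplied by~\eqref{Arnoldi_sai2}. This splits $Ay_k(t)$ into a term $V_kH_k\exp(-tH_k)(\beta e_1)$ and a rank-one correction carrying the factor $(I+\gamma A)v_{k+1}e_k^T\Ht_k^{-1}$.

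Assembling the residual $r_k(t)=-Ay_k(t)-y_k'(t)$ from the definition~\eqref{resid}, the term $V_kH_k\exp(-tH_k)(\beta e_1)$ produced by $-Ay_k(t)$ cancels against the identical term coming from $-y_k'(t)$, just as the $V_kH_k$ contributions cancel in Lemma~\ref{lemma1}. Only the rank-one correction survives,
\[
r_k(t) = \frac{\tilde{h}_{k+1,k}}{\gamma}\,(I+\gamma A)v_{k+1}\,e_k^T\Ht_k^{-1}\exp(-tH_k)(\beta e_1),
\]
and since $e_k^T\Ht_k^{-1}\exp(-tH_k)(\beta e_1)$ is a scalar it can be pulled to the front, giving the claimed formula for $r_k(t)$.

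For the norm estimate I would factor off this scalar and bound the remaining vector $(I+\gamma A)v_{k+1}$. Because the Arnoldi basis is orthonormal, $\|v_{k+1}\|=1$, and the triangle inequality gives $\|I+\gamma A\|\leqs 1+\gamma\|A\|$, so that $\|(I+\gamma A)v_{k+1}\|\leqs 1+\gamma\|A\|$; combining this with the scalar prefactor yields the stated bound. The calculation is essentially routine once~\eqref{Arnoldi_sai2} is available, and the only point worth isolating is that relation itself, since it is what both forces the $V_kH_k$ terms to cancel and exposes the vector $(I+\gamma A)v_{k+1}$. This vector is the one genuine difference from the unshifted case: unlike in Lemma~\ref{lemma1}, where the residual is an exact scalar multiple of a single unit basis vector $v_{k+1}$, here the surviving vector must be bounded rather than evaluated, so the result for $\|r_k(t)\|$ is an inequality instead of an equality.
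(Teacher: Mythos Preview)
Your proposal is correct and follows exactly the approach the paper itself takes: the paper's proof of Lemma~\ref{lemma2} consists of nothing more than the remark that one repeats the argument of Lemma~\ref{lemma1} with relation~\eqref{Arnoldi_sai2} in place of~\eqref{Arnoldi}. Your write-up is in fact more detailed than the paper's, since you spell out the cancellation of the $V_kH_k$ terms and the bound $\|(I+\gamma A)v_{k+1}\|\leqs\|I+\gamma A\|\leqs 1+\gamma\|A\|$ that the paper leaves implicit.
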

\begin{proof}
The proof is very similar to that of Lemma~\ref{lemma1}.
Instead of the conventional Arnoldi relation~\eqref{Arnoldi}, 
relation~\eqref{Arnoldi_sai2} should be used.
\end{proof}

\subsection{Error estimation in Krylov subspace methods}
If $y_k(t)$ is a Krylov subspace approximation to $y(t)=\exp(-tA)v$,
the error function $\err_k(t)\equiv y(t)-y_k(t)$ satisfies the IVP
\begin{equation}
\label{err1}
\err_k'(t)  =-A\err_k(t) + r_k(t),\quad \err_k(0)= 0.
\end{equation}
To estimate the error, this equation can be solved approximately by any 
suitable time integration scheme; for example, by Krylov exponential schemes
as discussed e.g.\ in \cite[Section~4]{GallSaad92} or \cite{HochLubSel97}.
The time integration process for solving~\eqref{err1} can further be 
optimized to take into account that the residual function $r_k(t)$ depends
on time as $r_k(t)=\psi_k(t)v_{k+1}$ with $v_{k+1}=\const$ 
and $\psi_k(t)$ being a scalar function of $t$ (see Lemma~\ref{lemma1}):
$$
\psi_k(t)\equiv -\beta h_{k+1,k} e_k^T\exp(-tH_k)e_1.
$$ 
Van den Eshof and Hochbruck~\cite{EshofHochbruck06} propose to get an error 
estimate by replacing in $\err_k(t)\equiv y(t)-y_k(t)$ the exact solution 
$y(t)$ with the same continued Krylov process approximation $y_{k+m}(t)$:
\begin{equation}
\begin{aligned}
\err_k(t) &\approx y_{k+m}(t)-y_k(t) = V_{k+m}u_{k+m}(t)-V_ku_k(t)=V_{k+m}\et_k(t),
\\
\|\err_k(t)\| &\approx \|\et_k(t)\|=\|u_{k+m}(t)-\ut_k(t)\|,  
\end{aligned}  
\end{equation}
where 
$$
V_ku_k(t)=V_{k+m}\ut_k(t),\quad
\ut_k(t) = [\, (u_k(t))^T,\underbrace{0,\dots, 0}_{m} \,]^T
$$
and $u_k(t)$ and $u_{k+m}(t)$ are the solutions of the projected 
IVP~\eqref{proj_ivp} obtained with respectively $k$ and $k+m$ 
Krylov steps.
It is not difficult to see that in this case $\et_k(t)\equiv u_{k+m}(t)-\ut_k(t)$
is the Galerkin solution of~\eqref{err1} with respect to the subspace $\colspan V_{k+m}$.
Indeed, we have 
\begin{align*}
y'_{k+m} &=-Ay_{k+m}-r_{k+m}(t),  \quad & y_{k+m}(t) &= V_{k+m}u_{k+m}(t),
\\
y'_{k  } &=-Ay_{k  }-r_{k  }(t),  \quad & y_{k  }(t) &= V_{k+m}\ut_k  (t).
\end{align*}
Subtracting $y_k'$ from $y_{k+m}'$ and multiplying the result from the left 
by $V_{k+m}^*$ (in assumption the orthonormality of $V_{k+m}$ is not spoiled
in floating point arithmetic) we obtain
$$
(u_{k+m}(t)-\ut_k(t))'=-H_{k+m}(u_{k+m}(t)-\ut_k(t)) + V_{k+m}^*r_k(t),
\quad V_{k+m}^*r_k(t)=\psi_k(t)e_{k+1},
$$
and we arrive at the projected IVP
\begin{equation}
\label{err2}
\et_k'(t)=-H_{k+m}\et_k(t) + \psi_k(t)e_{k+1},
\end{equation}
where $e_{k+1}$ is the $(k+1)$th basis vector in $\Rr^{k+m}$.
This shows that error estimation by the same continued Krylov process
is a better option than solving the correction equation~\eqref{err1}
by a new Krylov process: the latter would mean that we neglect
the built up subspace.  In fact, solving IVP~\eqref{err1} by another
process and then correcting the approximate solution $y_k(t)$ 
can be seen as a restarting of the Krylov subspace.  We explore this 
approach further in Section~\ref{sect:Richardson}.

\section{Matrix exponential residual for Chebyshev approximations}
\label{sect:Chebyshev}
A well-known method to compute $y_m(t)\approx\exp(-tA)v$ is
based on the Chebyshev polynomial expansion (see for instance
\cite{TalEzer89,RyabenkiiTsynkov}):
\begin{equation}
\label{chebyshev_expansion}
y_m(t)=P_m(-tA)v = \left[\sum_{k=1}^mc_kT_k(-tA) + \frac{c_0}2I\right]v.
\end{equation}
Here we assume that the matrix $tA$ can be transformed to have its
eigenvalues within the interval $[-1,1]\subset\Rr$
(for example, $A$ can be a Hermitian or a skew-Hermitian matrix).  
Here, $T_k$ are the Chebyshev polynomials of the 
first kind, whose actions on the given vector $v$ can be computed 
by the Chebyshev recursion
\begin{equation}
\label{chebyshev_recursion}
T_0(x)=1,\quad
T_1(x)=x,\quad
T_{k+1}(x) = 2xT_k(x)-T_{k-1}(x),\quad k=1,2,\dots,
\end{equation}
and the coefficients $c_k$ can be computed, for a large $M$, as
\begin{equation}
\label{c_k}
c_k=\frac2M\sum_{j=1}^M\exp(\cos(\theta_j))\cos(k\theta_j),\quad
k=0,1,\dots,m,\quad \theta_j=\frac{\pi(j-\frac12)}M,
\end{equation}
which means interpolating $\exp(x)$ at the Chebyshev polynomial roots
(see e.g.~\cite[Section~3.2.3]{RyabenkiiTsynkov}).
This Chebyshev polynomial approximation is used for evaluating
different matrix functions in~\cite{BenziRazouk07}.

The recursive algorithm \eqref{chebyshev_expansion}--\eqref{c_k}
can be modified to provide, along with $y_m(t)$, vectors $y_m'(t)$ and $Ay_m(t)$,
so that the exponential residual $r_m(t)\equiv-Ay_m(t)-y_m'(t)$ can be
controlled in the course of the iterations.  To do this, we use the well-known
relations 
\begin{align}
T_k'(x)&=kU_{k-1}(x),
\label{Tkprime}
\\
xT_k(x)&=\frac12(T_{k+1}(x)+T_{k-1}(x)),
\label{xTk}
\\
xU_k(x)&=\frac12(U_{k+1}(x)+U_{k-1}(x)),
\label{xUk}
\\
T_k(x)&=\frac12(U_{k}(x)-U_{k-2}(x)),
\label{Uk2Tk}
\end{align}
where $k =1,2,\dots$ and $U_k$ are the Chebyshev polynomials of the second kind:
\begin{equation}
\label{chebyshev_recursion2}
U_0(x)=1,\quad
U_1(x)=2x,\quad
U_{k+1}(x) = 2xU_k(x)-U_{k-1}(x),\quad k=1,2,\dots.
\end{equation}
For~\eqref{Uk2Tk} to hold for $k=1$ we denote $U_{-1}(x)=0$.
From~\eqref{chebyshev_expansion},\eqref{Tkprime} and~\eqref{xUk} it follows that
\begin{equation}
\label{w_prime_chebyshev}
\begin{aligned}
y_m'(t)&= \left[\sum_{k=1}^m\frac{c_k}t(-tA)T_k'(-tA)\right]v\\
       &= \left[\sum_{k=1}^m\frac{c_kk}{2t}(U_k(-tA)+U_{k-2}(-tA))\right]v,
\quad m=1,2,\dots.
\end{aligned}
\end{equation}
Similarly, from~\eqref{chebyshev_expansion}, \eqref{xTk} and \eqref{Uk2Tk}, we 
obtain
\begin{equation}
\label{minAw}
\begin{aligned}
-Ay_m(t)
&=\left[\sum_{k=1}^m\frac{c_k}{2t}(T_{k+1}(-tA)+T_{k-1}(-tA)) -\frac{c_0}2A\right]v
\\
&=\left[\sum_{k=1}^m\frac{c_k}{2t}(U_{k+1}(-tA)-U_{k-3}(-tA))-\frac{c_0}2A\right]v,
\quad m=1,2,\dots,
\end{aligned}
\end{equation}
where we define $U_{-2}(x)=-1$.

The obtained recursions can be used to formulate an algorithm for 
computing $y_m(t)\approx\exp(-tA)v$ that controls the residual $r_m(t)=-Ay_m(t)-y_m'(t)$, 
see Figure~\ref{fig:chebyshev_alg}.
Just like the original Chebyshev recursion algorithm for the matrix exponential,
it requires one action of the matrix $A$ per iteration.  To be able to control
the residual, more vectors have to be stored than in the conventional
algorithm: 8 instead of 4.

\begin{figure}
$$
\begin{aligned}
& u_{-2} := -v, \quad u_{-1} := 0, \quad u_0 := v, \quad u_1 := (-2*t)*(A*v)
\\
& \text{compute }c_0
\\
& y:= (0.5*c_0)*u_0,\quad y':= 0, \quad \mathtt{minusAy}  := (c_0/t/4)*u_1
\end{aligned}
$$
\vspace*{-4ex}$$
\begin{aligned}
\text{for }& k=1,\dots,N_{\max}\\
           & u_2 := (-2*t)*(A*u_1) - u_0\\
           & \text{compute }c_k\\
           & y  := y  + (c_k/2)    *(u_1-u_{-1})\\
           & y' := y' + (c_k*k/2/t)*(u_1+u_{-1})\\
           & \mathtt{minusAy}  := \mathtt{minusAy}  + (c_k/4/t)  *(u_2-u_{-2})\\
           & u_{-2} := u_{-1}\\
           & u_1 := u_0\\
           & u_0 := u_1\\
           & u_1 := u_2\\
           & \mathtt{resnorm} := \|\mathtt{minusAy} - y'\|\\
           &\text{if }\mathtt{resnorm} < \mathtt{toler}\\
           &\quad \text{break}\\
           &\text{end}\\
\text{end }&
\end{aligned}
$$ 
\caption{Chebyshev expansion algorithm to compute the vector $y_{N_{\max}}(t)\approx\exp(-tA)v$.
The input parameters are $A\in\Rrnn$, $v\in\Rr^n$, $t>0$ and $\mathtt{toler}>0$.  
It is assumed
that the eigenvalues $\lambda$ of $tA$ satisfy $-1\leqslant\lambda\leqslant 1$.}
\label{fig:chebyshev_alg}
\end{figure}

\section{Residual-based error estimates}
\label{sect:err_est}
By definition of the residual~\eqref{resid}, the approximate 
solution $y_k(t)\approx\exp(-tA)v$ is the exact solution of the problem
\begin{equation}
\label{ivp_pert}
y_k'(t)=-Ay_k(t)-r_k(t),\quad y(0)=v,
\end{equation}
which is a perturbation of the original problem~\eqref{ivp}.
Therefore the residual $r_k(t)$ can be seen as the backward error for $y_k(t)$.
From \eqref{ivp_pert} and~\eqref{ivp} it is easy to see that the error $\err_k(t)$
satisfies the initial-value problem
\begin{equation}
\label{ivp_error}
\err_k'(t)= -A\err_k(t) + r_k(t),\quad \err_k(0) = 0,
\end{equation}
with the exact solution 
\begin{equation}
\label{e_formula}
\err_k(t)=\int_0^t\exp((s-t)A)r_k(s)\dd s. 
\end{equation}
This formula can be used to obtain error bounds in terms
of the norms of the matrix exponential and the residual~\cite{Acta_stab}.

\begin{lemma}\label{lemma_e_estim}
Let $|A|$ denote a matrix whose entries are absolute values
of the entries of $A$.  Let $r_k(t):\Rr\rightarrow\Rr^n$ be continuous 
for $t\geqs 0$ and $\bar{r}_k(t)$ be a vector-function with entries 
$$
[\bar{r}_k(t)]_i=\max_{s\in[0,t]} |[r_k(s)]_i|, \quad i=1,\dots,n.
$$
It holds for any $t\geqs 0$ that
\begin{equation}
\label{e_estim}
\|\err_k(t)\|_*\leqs \| |t\varphi(-t A)|\,\bar{r}_k(t)\|_*
\leqs \| |t\varphi(-t A)|\|_*\|\bar{r}_k(t)\|_*,
\end{equation}
with $\|\cdot\|_*$ being any consistent matrix (vector) norm
and $\varphi(x)=(\exp(x)-1)/x$.
\\
Note that, for any $B\in\Rrnn$,
$\| |\varphi(B)|\|_*=\|\varphi(B)\|_*$ for
the 1- and $\infty$- norms.
\end{lemma}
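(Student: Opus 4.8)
The plan is to reduce the claim to an entrywise inequality built on the integral representation~\eqref{e_formula}, after which the matrix appearing under the norm is recognized as $t\varphi(-tA)$ and the two inequalities follow from monotonicity and consistency of $\|\cdot\|_*$.

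First I would rewrite the error by the change of variable $\sigma=t-s$ in~\eqref{e_formula}, obtaining the equivalent form
$$
\err_k(t)=\int_0^t\exp(-\sigma A)\,r_k(t-\sigma)\,\dd\sigma .
$$
Passing to the $i$th component and expanding the matrix--vector product gives
$$
[\err_k(t)]_i=\int_0^t\sum_{j=1}^n[\exp(-\sigma A)]_{ij}\,[r_k(t-\sigma)]_j\,\dd\sigma ,
$$
and applying the triangle inequality to both the finite sum over $j$ and the integral over $\sigma$ yields the entrywise bound
$$
|[\err_k(t)]_i|\leqs\int_0^t\sum_{j=1}^n\bigl|[\exp(-\sigma A)]_{ij}\bigr|\,\bigl|[r_k(t-\sigma)]_j\bigr|\,\dd\sigma .
$$

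Next, since $t-\sigma\in[0,t]$ whenever $\sigma\in[0,t]$, the definition of $\bar r_k$ gives $|[r_k(t-\sigma)]_j|\leqs[\bar r_k(t)]_j$ uniformly in $\sigma$, so the constant vector $\bar r_k(t)$ factors out of the integral. What is left is the matrix $\int_0^t\exp(-\sigma A)\,\dd\sigma$, which I would identify with $t\varphi(-tA)$ by integrating the power series term by term: $\int_0^t\exp(-\sigma A)\,\dd\sigma=\sum_{\ell\geqs0}(-1)^\ell t^{\ell+1}A^\ell/(\ell+1)!=t\varphi(-tA)$. The hard part will be the interchange of the entrywise absolute value with the time integral: the triangle inequality delivers $\int_0^t|\exp(-\sigma A)|\,\dd\sigma$, and matching this against $|t\varphi(-tA)|=\bigl|\int_0^t\exp(-\sigma A)\,\dd\sigma\bigr|$ requires controlling the signs of the entries of $\exp(-\sigma A)$ over $[0,t]$; this is the one step where the entrywise structure must be handled with care. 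Granting it, one obtains the entrywise inequality $|\err_k(t)|\leqs|t\varphi(-tA)|\,\bar r_k(t)$, and the first inequality in~\eqref{e_estim} follows because the norms in question are monotone, i.e.\ $0\leqs x\leqs y$ entrywise implies $\|x\|_*\leqs\|y\|_*$.

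Finally, the second inequality in~\eqref{e_estim} is immediate from consistency (sub-multiplicativity) of $\|\cdot\|_*$ applied to the product $|t\varphi(-tA)|\,\bar r_k(t)$. For the closing remark I would use that the $1$- and $\infty$-norms of a matrix equal, respectively, its largest absolute column sum and its largest absolute row sum; both are unchanged when each entry is replaced by its absolute value, so $\||C|\|_*=\|C\|_*$ for these two norms, and taking $C=\varphi(B)$ (equivalently $C=t\varphi(-tA)$) gives the stated identity and lets one drop the entrywise modulus in applications.
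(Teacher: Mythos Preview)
Your identification of ``the hard part'' is exactly right, and your route breaks down there. After pushing the absolute value inside both the sum and the integral you are left with the matrix $\int_0^t|\exp(-\sigma A)|\,\dd\sigma$, whereas the target is $|t\varphi(-tA)|=\bigl|\int_0^t\exp(-\sigma A)\,\dd\sigma\bigr|$. The inequality you would need, namely $\int_0^t|e_{ij}(\sigma)|\,\dd\sigma\leqs\bigl|\int_0^t e_{ij}(\sigma)\,\dd\sigma\bigr|$, points the wrong way; only the reverse holds in general. This is not a cosmetic gap: when $A$ generates a rotation the entries of $\exp(-\sigma A)$ oscillate, and the right-hand side can vanish while the left-hand side is strictly positive. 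Hence ``granting it'' cannot be granted, and the entrywise bound $|\err_k(t)|\leqs|t\varphi(-tA)|\,\bar r_k(t)$ does not follow from your chain of inequalities.

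The paper takes a different path at precisely this point: it never moves the absolute value inside the $s$-integral. Starting from $|[\err(t)]_i|\leqs\sum_j\bigl|\int_0^t e_{ij}(s)[r(s)]_j\,\dd s\bigr|$ with $e_{ij}(s)=[\exp((s-t)A)]_{ij}$, it uses that each $e_{ij}$ is analytic and therefore changes sign only finitely often on $[0,t]$, partitions the interval at those sign changes, and applies the mean-value theorem for integrals on each subinterval where $e_{ij}$ keeps a fixed sign. This extracts factors $[r(\xi_l)]_j$ while leaving the pieces $\int_{t_{l-1}}^{t_l}e_{ij}(s)\,\dd s$ \emph{without} modulus, so that they reassemble to $\int_0^t e_{ij}(s)\,\dd s=[t\varphi(-tA)]_{ij}$ before any absolute value is taken. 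That is the idea missing from your argument: pull out the $r$-factor via the mean-value theorem rather than via the crude pointwise bound $|e_{ij}(s)[r(s)]_j|\leqs|e_{ij}(s)|\,[\bar r(t)]_j$, which irreversibly destroys the sign information of $e_{ij}$. (A word of caution: the paper's subsequent step, sandwiching $\sum_l[r(\xi_l)]_j\int_{t_{l-1}}^{t_l}e_{ij}$ between $\min_s[r(s)]_j$ and $\max_s[r(s)]_j$ times $\int_0^t e_{ij}$, is itself delicate when the subinterval integrals carry opposite signs, so that passage deserves your scrutiny as well.)
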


\begin{proof}
For simplicity of notation, throughout the proof we omit 
the subindex $\cdot_k$ and write $\err_k(t)=\err(t)$
and $r_k(t)=r(t)$.  Denote, for a fixed $t$, the entry $(i,j)$ 
of the matrix $\exp((s-t)A)$ by $e_{ij}(s)$.
Entry $i$ of $\err(t)$ can be bounded as
\begin{equation}
\label{lemma_rel1}
\begin{aligned}
\left|[\err(t)]_i\right|
&=\left|\int_0^t\sum_{j=1}^ne_{ij}(s)[r(s)]_j\dd s \right|
 =\left|\sum_{j=1}^n\int_0^te_{ij}(s)[r(s)]_j\dd s \right|
\\
&\leqs\sum_{j=1}^n\left|\int_0^te_{ij}(s)[r(s)]_j\dd s \right|.    
\end{aligned}
\end{equation}
For any fixed $t\geqs 0$, $e_{ij}(s)$ is an infinitely differentiable
function in $s$, represented by the uniformly convergent power series
$$
e_{ij}(s)=\left[ I + (s-t)A + \dots + \frac{(s-t)^k}{k!}A^k + \dots \right]_{ij}.
$$
Therefore, on the interval $[0,t]$ the function $e_{ij}(s)$ changes 
its sign a finite number of times\footnote{This follows from the fact that 
a smooth function can have only a finite number of isolated roots
on a bounded interval.}.
Denote the points where $e_{ij}(s)$ changes its sign in $[0,t]$ 
by $t_1$, $t_2$, \dots, $t_{m-1}$, with $m$ depending on the indices $i$ and
$j$ of $e_{ij}(s)$.  Setting $t_0=0$ and $t_m=t$, we obtain
$$
\int_0^te_{ij}(s)[r(s)]_j\dd s = \sum_{l=1}^{m}
\int_{t_{l-1}}^{t_l}e_{ij}(s)[r(s)]_j\dd s,
$$
where $e_{ij}(s)$ is either nonnegative or nonpositive on each of the
subintervals $[t_{l-1},t_l]$.  Since the function $[r(s)]_j$ is continuous, 
a version of the mean-value theorem for the integral
\cite[Theorem~5, Section~6.2.3]{ZorichI} can be applied to each of the integrals
under the summation:
$$
\int_0^te_{ij}(s)[r(s)]_j\dd s = \sum_{l=1}^{m}
[r(\xi_l)]_j\int_{t_{l-1}}^{t_l}e_{ij}(s)\dd s,
$$
with $0=t_0\leqs\xi_1\leqs t_1\leqs\xi_2\leqs t_2\leqs\dots\leqs t_{m-1}\leqs\xi_m\leqs t_m=t$.
This yields the bounds
\begin{gather*}
\min_{0\leqs s\leqs t}[r(s)]_j\sum_{l=1}^{m}\int_{t_{l-1}}^{t_l}e_{ij}(s)\dd s
\leqs \int_0^te_{ij}(s)[r(s)]_j\dd s \leqs
\max_{0\leqs s\leqs t}[r(s)]_j\sum_{l=1}^{m}\int_{t_{l-1}}^{t_l}e_{ij}(s)\dd s,
\\
\min_{0\leqs s\leqs t}[r(s)]_j\int_{0}^{t}e_{ij}(s)\dd s
\leqs \int_0^te_{ij}(s)[r(s)]_j\dd s \leqs
\max_{0\leqs s\leqs t}[r(s)]_j\int_{0}^{t}e_{ij}(s)\dd s,
\\
\left|\int_0^te_{ij}(s)[r(s)]_j\dd s\right| \leqs
\underbrace{\max\left\{\left|\min_{0\leqs s\leqs t}[r(s)]_j\right|,
                \left|\max_{0\leqs s\leqs t}[r(s)]_j\right|\right\}
           }_{\displaystyle=\max_{0\leqs s\leqs t}\left|[r(s)]_j\right|=[\bar{r}(t)]_j}
\left|\int_0^te_{ij}(s)\dd s\right|.
\end{gather*}
Evaluating
$$
\int_0^t e_{ij}(s)\dd s = 
\int_0^t \left[ I + (s-t)A + \dots + \frac{(s-t)^k}{k!}A^k + \dots \right]_{ij}\dd s=
[t\varphi(-tA)]_{ij}
$$
and substituting
$$
\left|\int_0^te_{ij}(s)[r(s)]_j\dd s\right| 
\leqs\left|[t\varphi(-tA)]_{ij}\right| \, [\bar{r}(t)]_j
$$
into \eqref{lemma_rel1} yields
$$
\left|[\err(t)]_i\right|\leqs
\sum_{j=1}^n \left|[t\varphi(-tA)]_{ij}\right| \, [\bar{r}(t)]_j.
$$
\end{proof}


The estimates provided by the last lemma can be specified further
if more information on $A$ is available.  For symmetric
positive definite $A$ in the 2-norm holds
$$
\|\err_k(t)\| \leqs t\|\bar{r}_k(t)\|, \quad t\geqs 0.
$$
This estimate appeared in \cite[formula~(32)]{DruskinGreenbaumKnizhnerman98}.
If the eigenvalues $\lambda_i$ of $|A|$ are known to lie 
in the interval $[\lambda_{\min},\lambda_{\max}]$, with $\lambda_{\min}>0$,
then
$$
\| |t\varphi(-t A)|\| = \| t\varphi(-t \Lambda)\| =| t\varphi(-t \lambda_{\min})|,
$$
where $\Lambda$ is a diagonal matrix with the eigenvalues of $|A|$ as its entries.

\section{Richardson iteration for the matrix exponential}
\label{sect:Richardson}
The notion of the residual allows us to introduce a Richardson
method for the matrix exponential.
\subsection{Preconditioned Richardson iteration}
Consider the preconditioned Ri\-chard\-son iterative method 
\begin{equation}
\label{rich_ls}
x_{k+1} = x_k + M^{-1}r_k  
\end{equation}
for solving a linear system $Ax=b$, with the preconditioner $M\approx A$
and residual $r_k=b-Ax_k$.
Note that $M^{-1}r_k$ is an approximation to the unknown error $A^{-1}r_k$.
By analogy with~\eqref{rich_ls}, one can formulate the Richardson method
for the matrix exponential as 
\begin{equation}
\label{rich}
y_{k+1}(t)=y_k(t) + \et_k(t),  
\end{equation}
where $\et_k\approx\err_k$ is the approximate solution of the IVP~\eqref{err1}.
One option, which we follow here, is to choose a suitable 
$M\approx A$ and let $\et_k$ be the solution of the IVP
\begin{equation}
\label{err_rich}
\et_k'(t)  =-M\et_k(t) + r_k(t),\quad \et_k(0)= 0.
\end{equation}
Just as for solving linear systems, $M$ has to compromise
between the approximation quality $M\approx A$ and the ease of 
solving~\eqref{err_rich}.

In fact, the exponential Richardson method can be seen as a relative
of the waveform relaxation methods for solving ODEs, see
e.g.~\cite{LumsdaineWu1997,JanssenVandewalle1997}.
The key difference of method~\eqref{rich}--\eqref{err_rich} from 
the waveform relaxation methods is that the latter are merely time-stepping
methods.  In particular, in waveform relaxation methods
relation~\eqref{err_rich} is not solved as such but replaced by a discretization, 
e.g.\ by a linear multistep integration formula. 

The residual $r_k(t)$ of the Richardson iteration~\eqref{rich}--\eqref{err_rich} 
can be shown to satisfy the following  
recursion.  From~\eqref{rich} and~\eqref{err_rich} we have 
$$
-y'_{k+1}(t) = -y'_k(t) + M\et_k(t)-r_k(t).
$$
Subtracting relation $Ay_{k+1}(t)=Ay_k(t)+A\et_k(t)$ from this equation, 
we get
\begin{equation}
\label{res_rich}
r_{k+1}(t)=-y'_k(t)+M\et_k(t) - r_k(t)-Ay_k(t) - A\et_k(t)=
(M-A)\et_k(t).
\end{equation}
Taking into account that 
\begin{equation*}
\label{et_formula}
\et_k(t)=\int_0^t\exp((s-t)M)r_k(s)\dd s,
\end{equation*}
we obtain (cf.~\cite{LumsdaineWu1997})
\begin{equation}
\label{res_rich1}
r_{k+1}(t)=(M-A)\et_k(t)=(M-A)\int_0^t\exp((s-t)M)r_k(s)\dd s.
\end{equation}
Using relation~\eqref{e_estim}, we arrive at the following result.

\begin{lemma}
Let $|A|$ and $\bar{r}_k(t)$ be as defined in Lemma~\ref{lemma_e_estim}.
The residual $r_k(t)=-y'_k(t)-Ay_k(t)$ in the exponential Richardson
method~\eqref{rich} satisfies for any $t\geqs 0$
\begin{align*}
\|r_{k+1}(t)\|_* &\leqs \| |t(M-A)\varphi(-t M)|\,\bar{r}_k(t)\|_*  \\
                &\leqs \| |t(M-A)\varphi(-t M)|\|_*\|\bar{r}_k(t)\|_*,\\
\text{so that}\quad
\max_{s\in[0,t]}\|r_{k+1}(s)\|_* &\leqs
\max_{s\in[0,t]}\| |s(M-A)\varphi(-s M)|\|_*
\max_{s\in[0,t]}\|r_k(s)\|_*,
\end{align*}
with $\|\cdot\|_*$ being any consistent matrix (vector) norm
and $\varphi(x)=(\exp(x)-1)/x$.
\end{lemma}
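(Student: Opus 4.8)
The plan is to derive everything from the residual recursion~\eqref{res_rich1}, which already identifies the new residual as $r_{k+1}(t) = (M-A)\int_0^t\exp((s-t)M)r_k(s)\,\dd s$, and then to re-run the entrywise argument of Lemma~\ref{lemma_e_estim} with the kernel $\exp((s-t)A)$ replaced by the product $(M-A)\exp((s-t)M)$. First I would write out the $i$-th entry of $r_{k+1}(t)$ as $[r_{k+1}(t)]_i = \sum_{j}\int_0^t g_{ij}(s)[r_k(s)]_j\,\dd s$, where $g_{ij}(s)\equiv [(M-A)\exp((s-t)M)]_{ij}$. The crucial observation is that, exactly as in Lemma~\ref{lemma_e_estim}, each $g_{ij}(s)$ is analytic in $s$, being an entry of a matrix-valued power series in $s$, so it changes sign only finitely many times on $[0,t]$ and the same piecewise mean-value-theorem estimate applies, giving $\bigl|\int_0^t g_{ij}(s)[r_k(s)]_j\,\dd s\bigr| \leqs [\bar{r}_k(t)]_j\,\bigl|\int_0^t g_{ij}(s)\,\dd s\bigr|$.

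The second step is to evaluate the scalar integral of the kernel. Pulling the constant matrix $M-A$ out, $\int_0^t(M-A)\exp((s-t)M)\,\dd s = (M-A)\int_0^t\exp((s-t)M)\,\dd s = t(M-A)\varphi(-tM)$, by precisely the computation used at the end of the proof of Lemma~\ref{lemma_e_estim} with $A$ replaced by $M$. Substituting $\bigl|\int_0^t g_{ij}(s)\,\dd s\bigr| = \bigl|[t(M-A)\varphi(-tM)]_{ij}\bigr|$ into the entrywise estimate yields $|[r_{k+1}(t)]_i| \leqs \sum_j \bigl|[t(M-A)\varphi(-tM)]_{ij}\bigr|\,[\bar{r}_k(t)]_j$, that is, the entrywise inequality $|r_{k+1}(t)| \leqs |t(M-A)\varphi(-tM)|\,\bar{r}_k(t)$. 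Taking any consistent norm of both sides, and then splitting off the matrix factor by consistency, produces the first two displayed inequalities.

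The hard part will be securing the \emph{sharp} form of the bound, with $|t(M-A)\varphi(-tM)|$ rather than the coarser $|M-A|\,|t\varphi(-tM)|$ that a black-box application of Lemma~\ref{lemma_e_estim} to $\et_k$ alone would give. This is exactly what forces me to keep $M-A$ inside the integral and to treat $(M-A)\exp((s-t)M)$ as a single analytic kernel; the payoff is that the integral then collapses to $t(M-A)\varphi(-tM)$ in one step. The only delicacy is re-justifying the finitely-many-sign-changes property for this new kernel, but it is immediate from analyticity in $s$, so no genuinely new estimate is needed beyond what Lemma~\ref{lemma_e_estim} already supplies.

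For the final (``so that'') inequality I would apply the second inequality at each time $s\in[0,t]$, bound $\| |s(M-A)\varphi(-sM)| \|_*$ by its maximum over $[0,t]$, and use monotonicity of the (absolute) norm together with $\bar{r}_k(s)\leqs \bar{r}_k(t)$ entrywise for $s\leqs t$. The one point to watch is the passage from $\|\bar{r}_k(t)\|_*$ to $\max_{s\in[0,t]}\|r_k(s)\|_*$: this is an identity for the $\infty$-norm, where $\|\bar{r}_k(t)\|_\infty = \max_{s\in[0,t]}\|r_k(s)\|_\infty$, which is the norm already singled out in Lemma~\ref{lemma_e_estim}, and I would phrase the conclusion accordingly.
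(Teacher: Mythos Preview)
Your proposal is correct and follows exactly the approach the paper intends: its proof is the single sentence ``The proof follows the lines of the proof of Lemma~\ref{lemma_e_estim},'' and you have spelled out precisely that argument, replacing the kernel $\exp((s-t)A)$ by $(M-A)\exp((s-t)M)$ and recomputing the integral to obtain $t(M-A)\varphi(-tM)$. Your remark about keeping $M-A$ inside the kernel to land on $|t(M-A)\varphi(-tM)|$ rather than the looser $|M-A|\,|t\varphi(-tM)|$ is the right point, and your caution about the passage $\|\bar r_k(t)\|_*\to\max_{s\in[0,t]}\|r_k(s)\|_*$ holding as an identity only for the $\infty$-norm is a genuine subtlety that the paper does not flag.
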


\begin{proof}
The proof follows the lines of the proof of 
Lemma~\ref{lemma_e_estim}.
\end{proof}

The estimate provided by the lemma shows that, at least for some matrices
$A$ and $M$ and not too large $t\geqs 0$, the exponential 
Richardson iteration converges faster than the Richardson iteration 
for linear system solution.
Indeed, since
$$
t(M-A)\varphi(-tM) = (M-A)M^{-1}(I-\exp(-tM)),
$$
the upper bounds for the residual reduction are
\begin{align*}
\text{linear system Richardson:}&\quad  
\|r_{k+1}\|_*\leqs \| (M-A)M^{-1}\|_*\|r_k\|_*,
\\
\text{exponential Richardson:}&\quad  
\|r_{k+1}(t)\|_*\leqs \| | (M-A)M^{-1}(I-\exp(-tM)) | \|_*\|\bar{r}_k(t)\|_*, 
\end{align*}
with $t\geqs 0$ in the second inequality (both sides of the inequality are
zero for $t=0$).
For general matrices $A$ and $M$ it is hard to prove that 
$$
\| | (M-A)M^{-1}(I-\exp(-tM)) | \| \leqs \| (M-A)M^{-1}\|, \qquad t\geqs 0.
$$
This inequality holds in the 2-norm, for instance, if $A$ is an $M$-matrix
and $M$ is its diagonal part (in this case the matrices $M-A$, $M^{-1}$ 
and $I-\exp(-tM)$ are elementwise nonnegative and we can get rid of 
the absolute value sign).  As can be seen in Figure~\ref{fig:Richardson_bounds},
exponential Richardson can converge reasonably well even when 
$\| (M-A)M^{-1}\|$ is hopelessly close to one.

\begin{figure}
\centering\epsfig{file=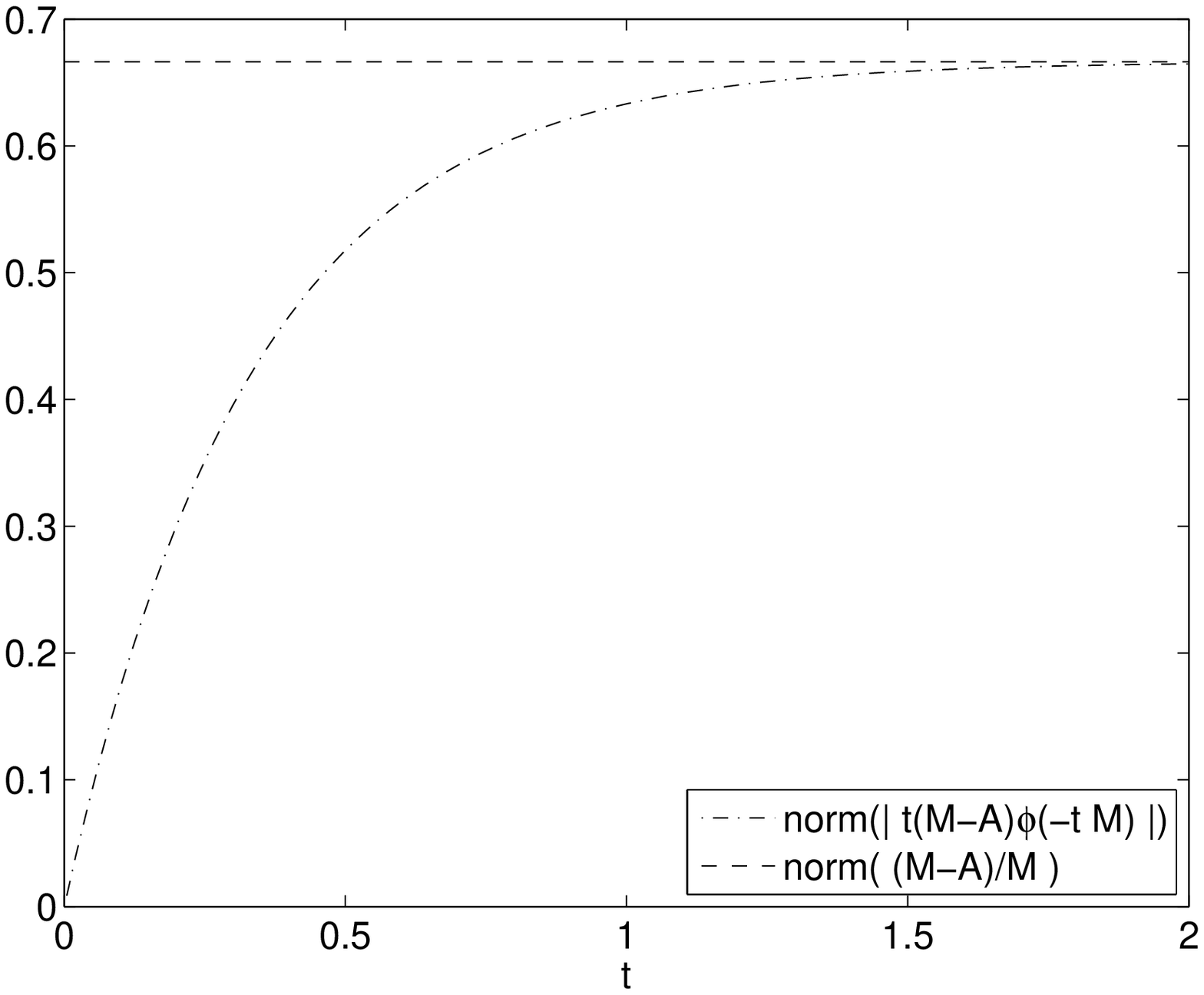, width=0.6\linewidth}\\
\centering\epsfig{file=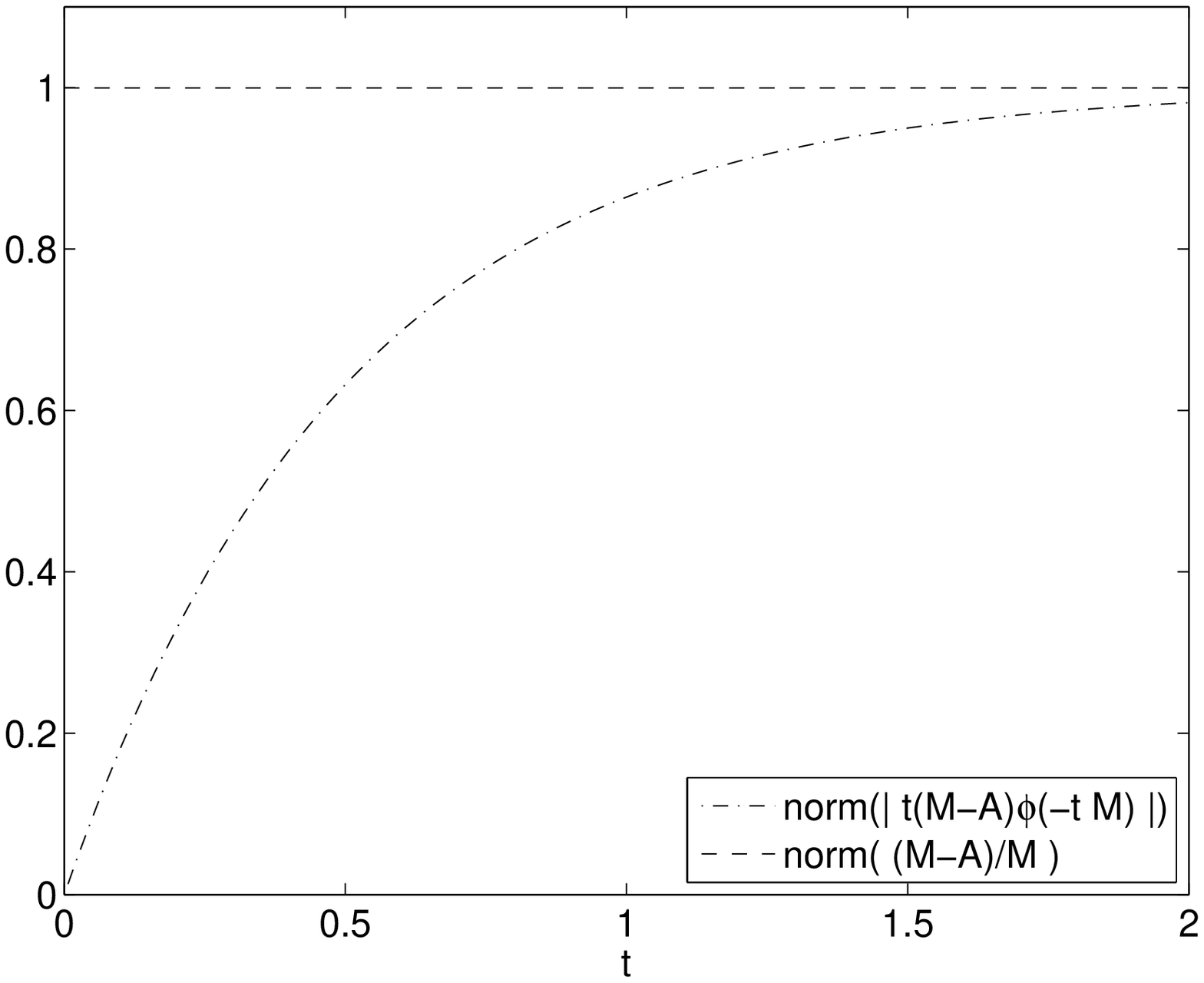,width=0.6\linewidth}
\caption{Upper bounds for residual reduction in Richardson iteration 
for the linear system (dashed) and matrix exponential (dash-dotted).
$A=\mathrm{tridiag} (-1, 3, -1)$ (top) and 
$A=\mathrm{tridiag} (-1, 2, -1)$ (bottom).  In both cases 
$A\in\Rr^{100\times 100}$, $M=\mathrm{diag}(A)$.}
\label{fig:Richardson_bounds}  
\end{figure}

An important practical issue hindering the use of the exponential Richardson
iteration is the necessity to store the vectors $r_k(t)$ for different $t$.
To achieve a good accuracy,
sufficiently many samples of $r_k(t)$ have to be stored.  Our limited experience
indicates that the exponential Richardson iteration can be of interest
if the accuracy requirements are relatively low, say up to $10^{-5}$.
In the experiments described in Section~\ref{sect:numerRich} just 20~samples 
were sufficient to get the residual below tolerance $10^{-4}$
for a matrix of size $n=10^4$.

\subsection{Krylov restarting via Richardson iteration}
\label{sect:Krylov-Richardson}
In the exponential Ri\-chard\-son iteration~\eqref{rich} the error $\et_k(t)$
does not have to satisfy~\eqref{err_rich}, which is just one possible 
choice for $\et_k(t)$.  Another choice is to take $\et_k(t)$ to be the 
Krylov approximate solution of the IVP
\begin{equation}
\label{ivp1}
\et_k'=-A\et_k+r_k(t),\quad \et_k(0)=0.
\end{equation}
If the approximate solution $y_k(t)$ is also obtained by a
Krylov process, then the Richardson iteration~\eqref{rich}, \eqref{ivp1}
can be seen as a restarted Arnoldi/Lanczos method for computing $\exp(-tA)v$.
Indeed, assume that the IVP~\eqref{ivp1} is solved approximately by $m$ 
Arnoldi or Lanczos steps, so that the next Richardson approximation is
\begin{equation}
\label{rich_rst}
y_{k+m}(t)=y_k(t) + \et_k(t).  
\end{equation}
Assume $y_k(t)$ is the Krylov or SaI Krylov approximation
to $\exp(-tA)v$, given by~\eqref{Krylov1}, \eqref{Arnoldi} 
or by~\eqref{Krylov1}, \eqref{Arnoldi_saiH} respectively.
To derive an expression for $r_{k+m}(t)$, we first notice that
\begin{equation}
\label{res_rst0}
r_k(t) = \psi_k(t) w_k,  \quad
\psi_k: \Rr\rightarrow \Rr, \quad w_k=\const\in\Rr^n,
\end{equation}
with a scalar function $\psi_k(t)$ and a constant vector $w_k$.
These are given by
$$
\psi_k(t)=-\beta h_{k+1,k} e_k^T\exp(-tH_k)e_1,\quad
w_k=v_{k+1}
$$
for the regular Krylov approximation (see Lemma~\ref{lemma1}) and by
$$
\psi_k(t)=\beta\,\frac{\tilde{h}_{k+1,k}}{\gamma}\, 
e_k^T\Ht_k^{-1}\exp(-tH_k)e_1 (I+\gamma A)v_{k+1},
\quad
w_k= (I+\gamma A)v_{k+1}
$$
for the shift-and-invert Krylov approximation (see Lemma~\ref{lemma2}).
The error 
$$
\err_k(t)=y(t)-y_k(t)=\int_0^t\exp((s-t)A)r_k(s)\dd s=
\int_0^t\psi_k(s)\exp((s-t)A)w_k\dd s
$$ 
is approximated by the $m$-step Krylov solution~$\et_k(t)$ of 
\eqref{ivp1}:
\begin{equation}
\label{et_k}
\begin{aligned}
\et_k(t) &=
\int_0^t\psi_k(s) \Vhat_m\exp((s-t)\Hhat_m) \|w_k\|e_1\dd s\\
&=\Vhat_m\underbrace{\int_0^t \exp((s-t)\Hhat_m)\psi_k(s) \|w_k\|e_1 \dd s}_%
{\uhat(t)},  
\end{aligned}
\end{equation}
where $e_1=(1,0,\dots,0)^T\in\Rr^m$ and $\Vhat_m\in\Rr^{n\times m}$
and $\Hhat_m\in\Rr^{m\times m}$ result from $m$ steps of the Arnoldi/Lanczos
process for the matrix $A$ and the vector $w_k$.
It is not difficult to see that $\uhat(t)$ is the solution of the IVP
\begin{equation}
\label{ivp_uhat}
\uhat'(t)=-\Hhat_m\uhat(t)+\psi_k(t)\|w_k\|e_1,\quad \uhat(0)=0.
\end{equation}
From~\eqref{et_k} and~\eqref{ivp_uhat}, we have 
\begin{equation}
\label{res_rst2}
\begin{aligned}
&r_{k+m}(t) 
 =  -y'_{k+m}(t) -Ay_{k+m}(t) = -y'_k(t)-\et'_k(t)-Ay_k(t)-A\et_k(t)\\
&= r_k(t)-\Vhat_m\uhat'(t)-A\Vhat_m\uhat(t) =
   r_k(t)-\Vhat_m(-\Hhat_m\uhat(t)+\psi_k(t)\|w_k\|e_1)-A\Vhat_m\uhat(t)\\
&= r_k(t) +\Vhat_m\Hhat_m\uhat(t)
   -\underbrace{\psi_k(t)\|w_k\|\Vhat_m e_1}_{r_k(t)}-A\Vhat_m\uhat(t) =
   (\Vhat_m\Hhat_m-A\Vhat_m)\uhat(t).
\end{aligned}
\end{equation}
If $\Vhat$ and $\Hhat$ result from the conventional Arnoldi/Lanczos
process, then (cf.\ \eqref{Arnoldi}) 
$\Vhat_m\Hhat_m-A\Vhat_m=-\hhat_{m+1,m}\vhat_{m+1}e_m^T$, so that
\begin{equation}
\label{res_rst2a}
r_{k+m}(t) = -\hhat_{m+1,m}[\uhat(t)]_m\vhat_{m+1},
\end{equation}
with $[\uhat(t)]_m$ being the last component of $\uhat(t)$.
If $\Vhat$ and $\Hhat$ are obtained with the SaI Ar\-nol\-di/Lanc\-zos
process then (cf.\ \eqref{Arnoldi_sai2})
$$
\Vhat_m\Hhat_m-A\Vhat_m = {\widehat{\tilde{h}}_{m+1,m}}\gamma^{-1}
(I+\gamma A)\vhat_{m+1}e_m^T\widehat{\tilde{H}}_m^{-1},
$$
with all quantities defined 
by~\eqref{Arnoldi_sai}--\eqref{Arnoldi_saiH}
(replacing the subindices $\cdot_k$ by $\cdot_m$ and adding 
the $\widehat{\cdot}$ sign).  This yields
\begin{equation}
\label{res_rst2b}
r_{k+m}(t) = {\widehat{\tilde{h}}_{m+1,m}}\gamma^{-1}
            [\widehat{\tilde{H}}_m^{-1}\uhat(t)]_m(I+\gamma A)\vhat_{m+1}.
\end{equation}
From~\eqref{res_rst2a} and~\eqref{res_rst2b} we see that
the residual $r_{k+m}(t)$ is, just as in~\eqref{res_rst0},
a scalar time-dependent function times a constant vector.
This shows that the derivation for $r_{k+m}(t)$ remains valid
for all Krylov-Richardson iterations (formally, we can set 
$y_k(t):=y_{k+m}(t)$ and repeat the iteration~\eqref{rich_rst}).

\section{Numerical experiments}
\label{sect:num_exp}
All our numerical experiments have been carried out with \Matlab{}
on a Linux and Mac PCs.
Unless reported otherwise, in all experiments the initial vector 
$v$ is taken to be the normalized vector with equal entries.
Except Section~\ref{sect:Cheb_exp},
in all the experiments the error reported
is the relative error norm 
with respect to a reference solution computed by the EXPOKIT
method~\cite{EXPOKIT}.
The error reported for EXPOKIT is the error 
estimate provided by this code.

\begin{figure}
\centering\epsfig{file=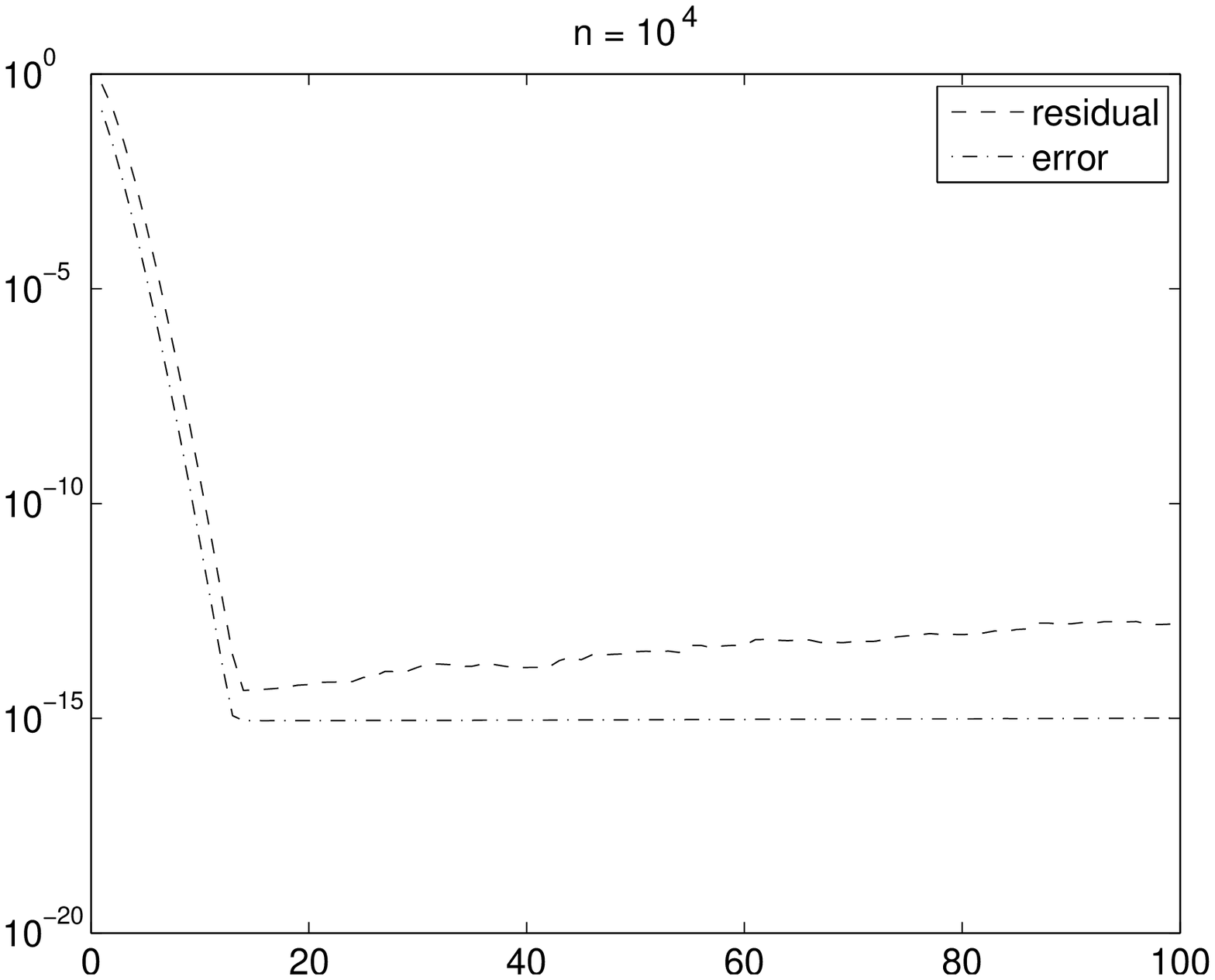,width=0.6\linewidth}
\centering\epsfig{file=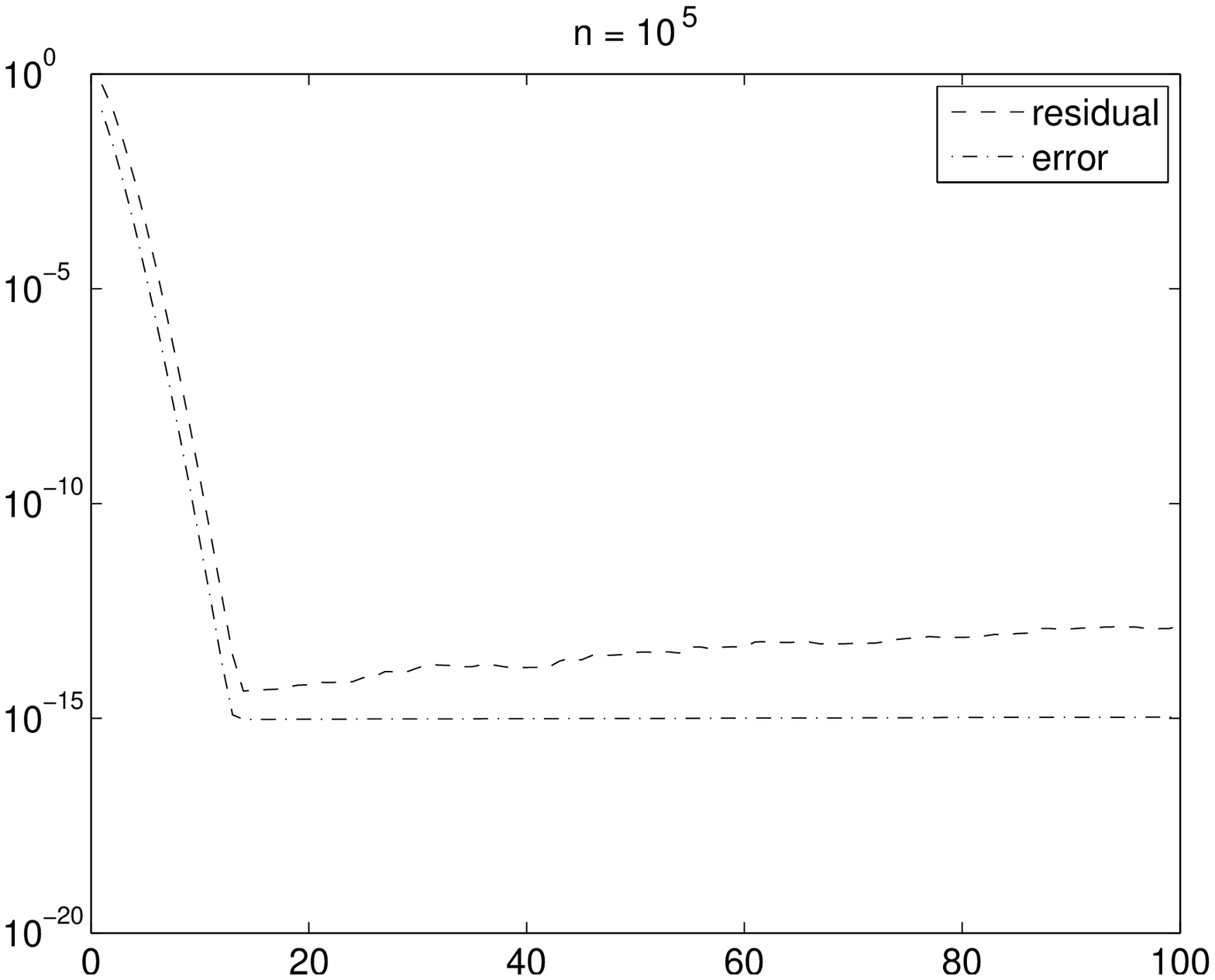,width=0.6\linewidth}
\caption{The residual and true error norms in the Chebyshev algorithm 
to compute $y_m\approx\exp(-A)v$ against iteration number $m$.  
Normal matrix $A\in\Rrnn$.
Top: $n=10^4$.  Bottom: $n=10^5$.}
\label{fig:test_chebyshev1}
\end{figure}

\subsection{Residual in Chebyshev iteration}
\label{sect:Cheb_exp}
The following tests are carried out for the Chebyshev
iterative method with incorporated residual control (see Figure~\ref{fig:chebyshev_alg}).  
We compute $\exp(-A)v$, where $v\in\Rr^n$ is a random vector 
with mean zero and standard deviation one.  In the first test, the
matrix $A\in\Rrnn$ is diagonal with diagonal entries evenly distributed 
between $-1$ and $1$.  In the second test, we fill the first superdiagonal
of $A$ with ones, so that $A$ becomes ill-conditioned.
The plots of the error and residual norms are presented in 
Figures~\ref{fig:test_chebyshev1} and~\ref{fig:test_chebyshev2}.

\begin{figure}
\centering\epsfig{file=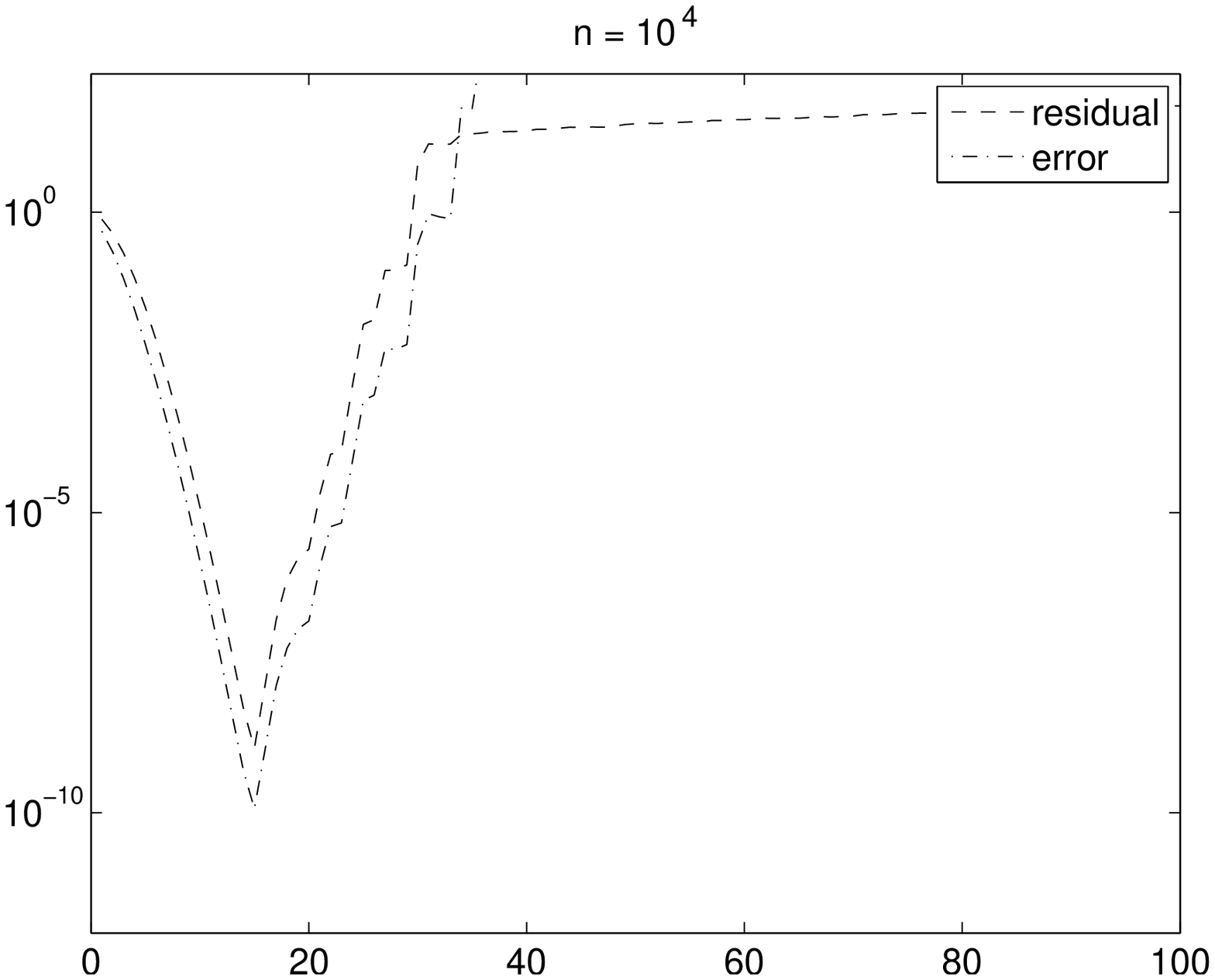,width=0.6\linewidth}
\centering\epsfig{file=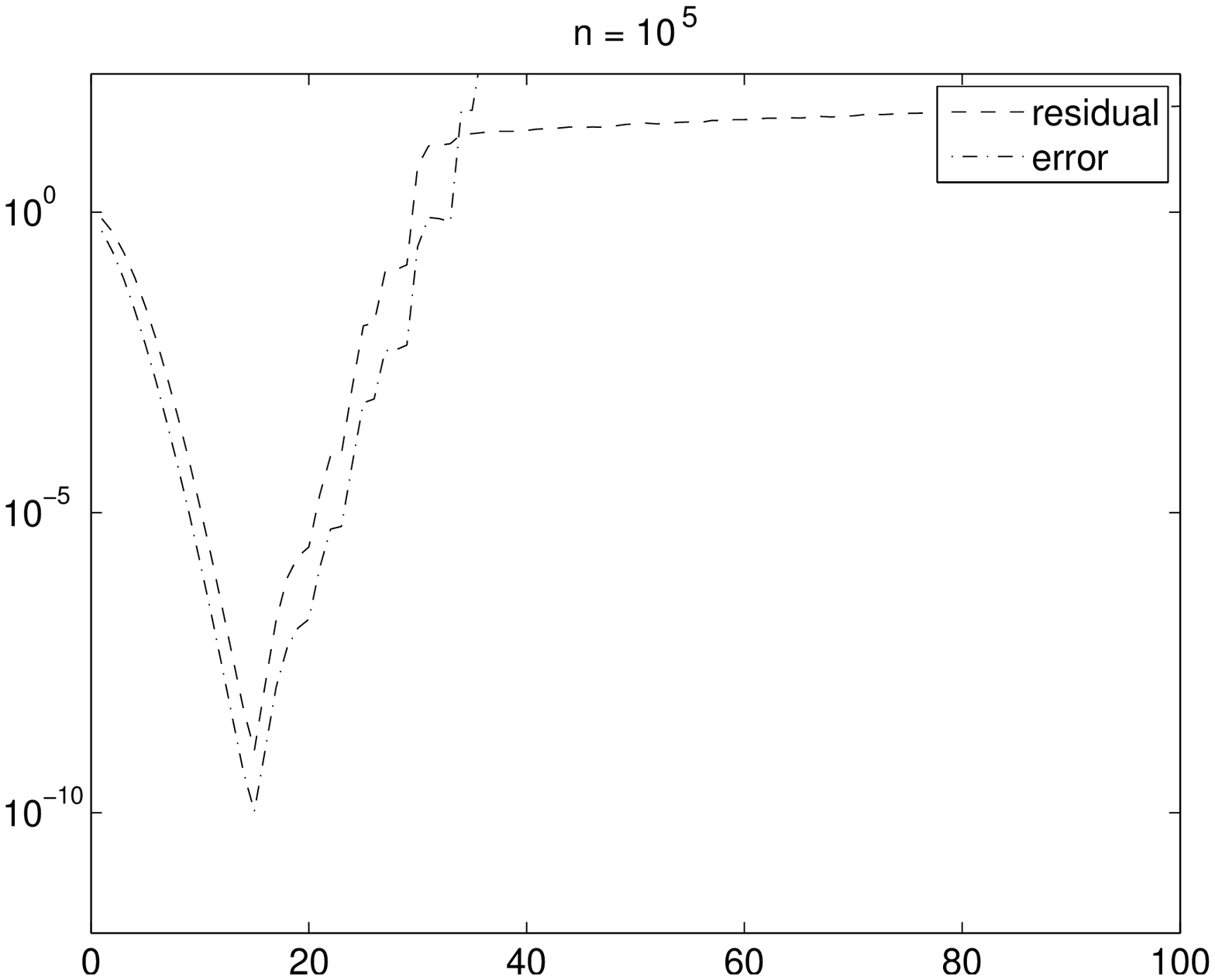,width=0.6\linewidth}
\caption{The residual and true error norms in the Chebyshev algorithm 
to compute $y_m\approx\exp(-A)v$ against iteration number $m$.  
Nonnormal matrix $A\in\Rrnn$.
Top: $n=10^4$.  Bottom: $n=10^5$.}
\label{fig:test_chebyshev2}
\end{figure}

As can be expected, for nonnormal $A$ the error accumulates during the 
iteration, so it is important to know when to stop the iteration.
Too many iterations may yield a completely wrong answer.  The residual
sharply reflects the error behavior, thus providing a reliable error
estimate. 

\subsection{A convection-diffusion problem}
\label{sect:conv_diff}
In the next several numerical experiments the matrix $A$ is taken
to be the standard five-point central difference discretization
of the following convection-diffusion operator acting on
functions defined in the domain $(x,y)\in [0,1]^2$:
\begin{gather*}
L[u]=-(D_1u_x)_x-(D_2u_y)_y + \Pe{}(v_1u_x + v_2u_y),
\\
D_1(x,y)=\begin{cases}
    10^3 \; &(x,y)\in [0.25,0.75]^2, \\
    1       &\text{otherwise},
    \end{cases}\qquad\quad D_2(x,y)=\frac12 D_1(x,y),
\\
v_1(x,y) = x+y,\qquad v_2(x,y)=x-y.
\end{gather*}
To guarantee that the convection terms yield an exactly skew-symmetric
matrix, before discretizing we rewrite the convection terms in the 
form~\cite{Krukier79}
$$
v_1u_x + v_2u_y = \frac12(v_1u_x + v_2u_y) + \frac12((v_1u)_x + (v_2u)_y).  
$$
This is possible because the velocity field $(v_1,v_2)$ is divergence
free.  The operator $L$ is set to satisfy the homogeneous Dirichlet
boundary conditions.
The discretization is done on a $102\times 102$ or $402\times 402$
uniform mesh, producing an $n\times n$ matrix $A$ of size $n=10^4$
or $n=16\times 10^4$, respectively.
The Peclet number varies from $\Pe{}=0$ (no convection,
$A=A^T$) to $\Pe{}=10^3$, which on the finer mesh means 
$\|A-A^T\|_1/\|A+A^T\|_1\approx 8\times 10^{-4}$.

\begin{table}
\centering
\caption{Performance of the exponential Richardson method 
         for the convection-diffusion test problem, 
         $\mathtt{toler}=10^{-4}$, $M=\mathrm{tridiag}(A)$.
         The CPU times are measured on a 3GHz Linux PC.
         We emphasize that the CPU time measurements are made in Matlab and thus
         are only an approximate indication of the actual performance.}\label{t:Richardson}
\begin{tabular}{cccccc}
\hline\hline
                     & flops/$n$, 
                             & matvecs      & LU           & solving      &   error\\
                     & CPU time, s
                             & $A$ / steps  & $I+\alpha M$ & $I+\alpha M$ &     \\
\hline
\multicolumn{6}{c}{$\Pe = 0$}\\\hline
EXPOKIT         & 4590, 2.6 & 918 matvecs &  ---          & ---     & 1.20e$-$11\\
exp.\ Richardson& 2192, 1.7 & 8 steps     & 24            & 176     & 2.21e$-$04\\
\hline
\multicolumn{6}{c}{$\Pe = 10$}\\\hline
EXPOKIT         & 4590, 2.6 & 918 matvecs &  ---          & ---     & 1.20e$-$11\\     
exp.\ Richardson& 2202, 1.7 & 8 steps     & 29            & 176     & 2.25e$-$04\\
\hline
\multicolumn{6}{c}{$\Pe = 100$}\\\hline
EXPOKIT         & 4590, 2.6 & 918 matvecs &  ---          & ---     & 1.20e$-$11\\
exp.\ Richardson& 2492, 1.9 & 9 steps     & 31            & 200     & 4.00e$-$04\\
\hline
\end{tabular} 
\end{table}

\begin{figure}
\centering
\epsfig{file=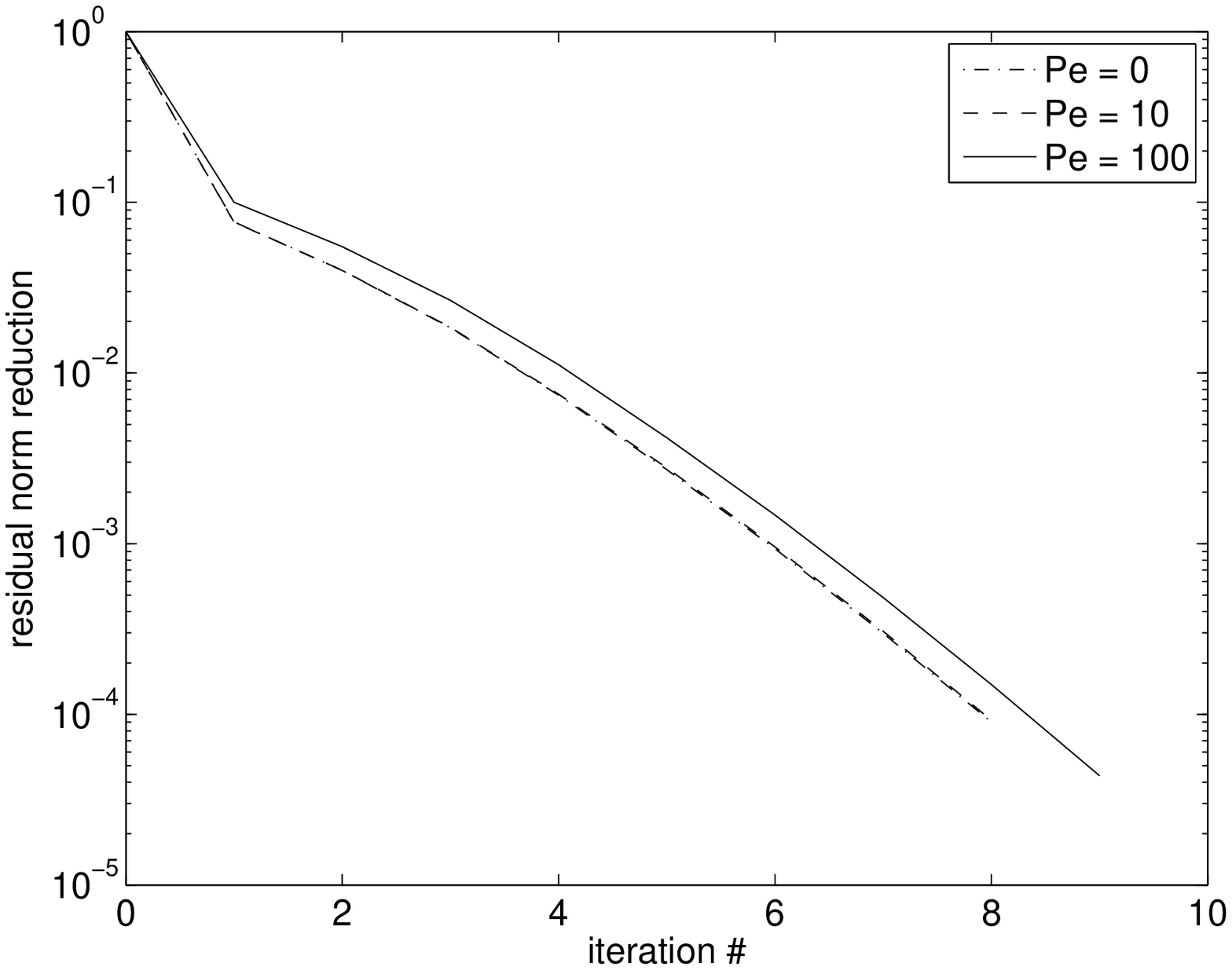,width=0.6\linewidth}  
\caption{Convergence history of the exponential Richardson iteration}
\label{f:Richardson}
\end{figure}

\subsection{Exponential Richardson iteration}
\label{sect:numerRich}
In this section we apply the exponential Richardson iteration 
\eqref{rich}, \eqref{err_rich} to
compute the vector $\exp(-A)v$ for the convection-diffusion matrices
$A$ described in Section~\ref{sect:conv_diff}. The mesh is 
taken to be $102\times 102$.
As discussed above, to be able to update the residual and solve
the IVP~\eqref{err_rich}, we need to store the values of $r_k(t)$ 
for different $t$ spanning the time interval of interest.
Too few samples may result in an accuracy loss in the interpolation
stage.  On the other hand, it can be prohibitively expensive to store 
many samples.  Therefore, in its current form, the method does not seem
practical if a high accuracy is needed.  On the other hand, it turns out 
that with relatively few samples ($\approx 20$) a moderate 
accuracy up to $10^{-5}$ can be reached.

We organize the computations in the method as follows.
The residual vector function $r_k(t)$ is stored as 20~samples.
At each iteration, the IVP~\eqref{err_rich} is solved by the 
\Matlab{} \texttt{ode15s} ODE solver, and the values of the right-hand
side function $-M\et_k(t) + r_k(t)$ are interpolated using the 
stored samples.  The \texttt{ode15s} solver is run with tolerances
determined by the final required accuracy and produces
the solution $\et_k(t)$ in the form of its twenty samples.
Then, the solution and residual are updated according to~\eqref{rich} 
and~\eqref{res_rich1} respectively.

We have chosen $M$ to be the tridiagonal part $\mathrm{tridiag}(A)$
of the matrix $A$.  Table~\ref{t:Richardson} and 
Figure~\ref{f:Richardson} contains results
of the test runs.  Except the Richardson method, as a reference
we use the EXPOKIT code~\cite{EXPOKIT} with the maximal Krylov 
dimension 100.  
Note that EXPOKIT provides a much accurate solution than requested
by the tolerance $\mathtt{toler}=10^{-4}$.

It is rather difficult
to compare the total computational work of the EXPOKIT and Richardson
methods exactly.  We restrict ourselves to the matrix-vector part of the work.
In the Richardson method this work consists of the matrix-vector
multiplication (matvec) with $M-A$ in~\eqref{res_rich1} and 
the work done by the \texttt{ode15s} solver.  The matvec with 
bidiagonal $M-A$ 
costs about $3n$ flops times 20~samples, in total $60n$ 
flops\footnote{We use definition of flop from 
\cite[Section~1.2.4]{GolVanL}.}.
The linear algebra work in \texttt{ode15s} is essentially
tridiagonal matvecs, LU~factorizations and back/forward substitutions
with (possibly shifted and scaled) $M$.
According to
\cite[Section~4.3.1]{GolVanL}, tridiagonal LU~factorization,
back- and forward substitution require about $2n$ flops each.
A matvec with tridiagonal $M$ is $5n$~flops.  Thus,
in total exponential Richardson costs $60n$~flops times the number of iterations
plus $2n$~flops times the number of LU~factorizations and back/forward
substitutions plus $5n$~flops times the total number of ODE solver steps.
The matvec work in EXPOKIT consists of matvecs with pentadiagonal $A$,
which is about $9n$ flops.  

From Table~\ref{t:Richardson} we see that exponential Richardson
is approximately twice as cheap as EXPOKIT.  
As expected from the convergence estimates, exponential
Ri\-chard\-son converges much faster than the conventional Richardson 
iteration for solving a linear system $Ax=v$ would do.  For these 
$A$ and $v$, 
8--9 iterations of the conventional Richardson would only give a residual 
reduction by a factor of $\approx 0.99$.

\begin{figure}
\epsfig{file=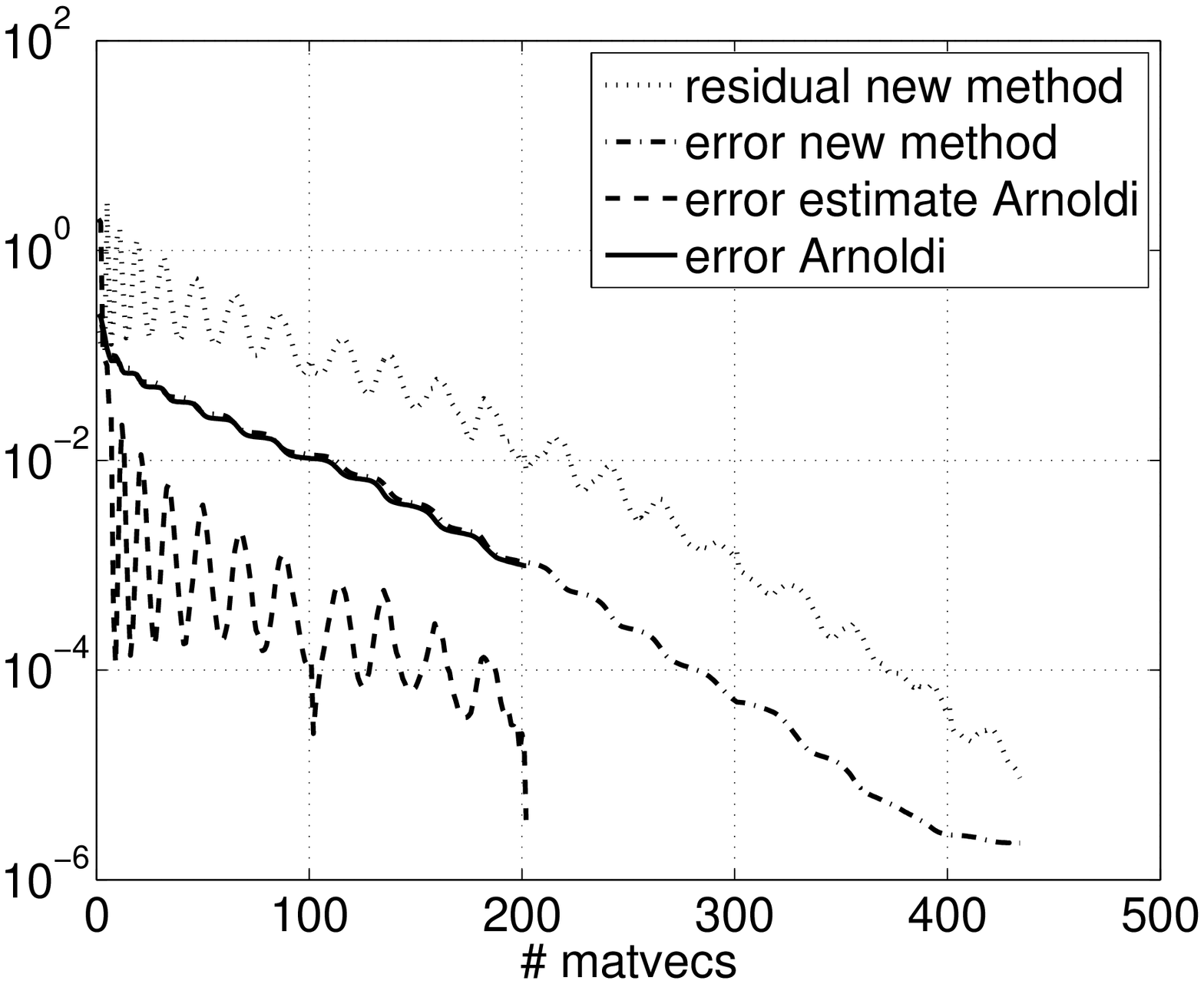,width=0.49\linewidth}%
\epsfig{file= 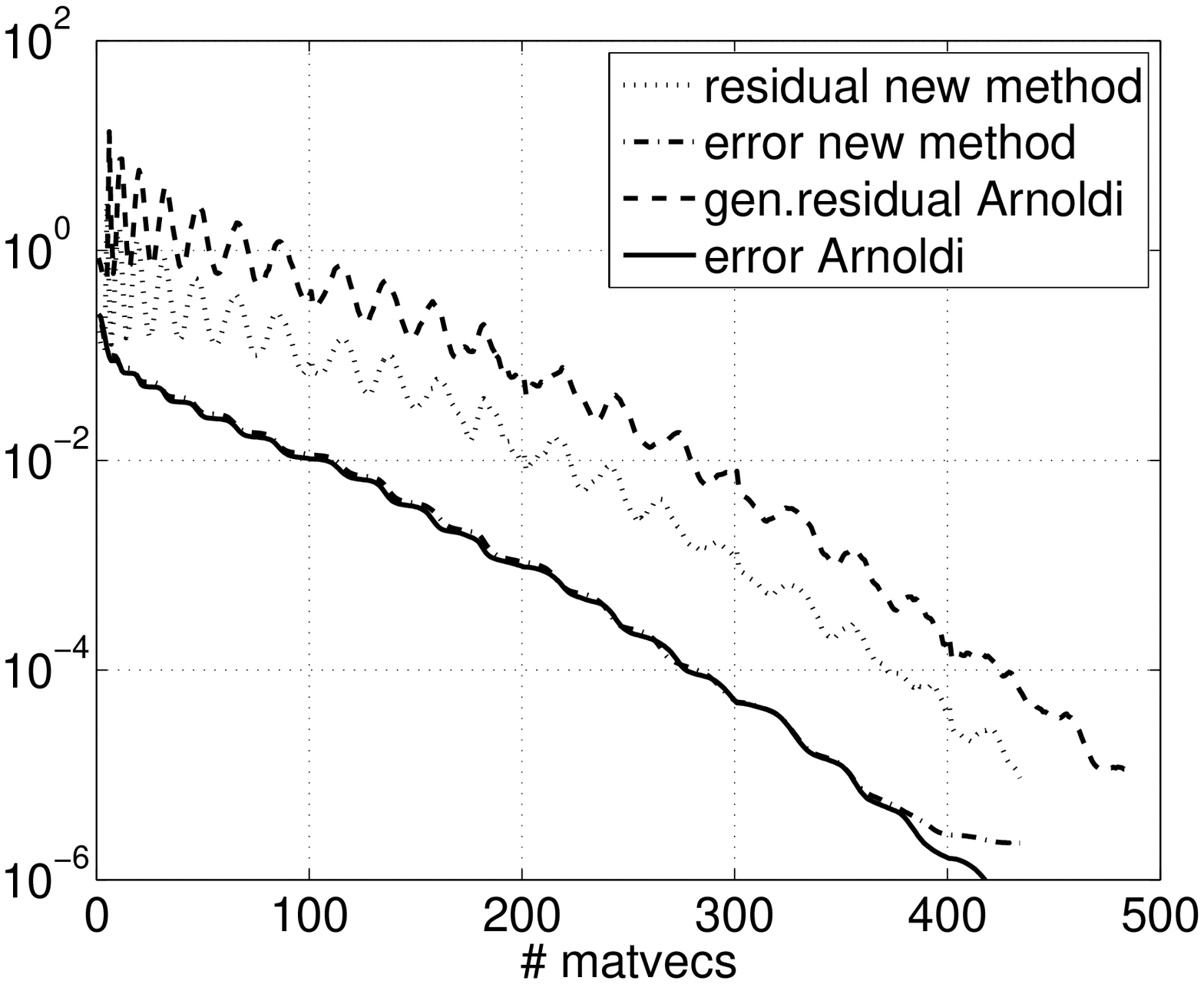,width=0.49\linewidth}
\caption{Convergence of the conventional Arnoldi method with two existing 
stopping criteria and Krylov-Richardson with the residual-based
stopping criterion for tolerance $\mathtt{toler}=10^{-5}$.
Left: stagnation-based criterion, Arnoldi stops too early
(201~matvecs, 2.6~s CPU time, error 1.0e$-$03).
Right: generalized residual criterion, Arnoldi stops too 
late 
(487~matvecs, 139~s CPU time, error 4.9e$-$08).  
Parameters of the Krylov-Richardson run for both plots: 
434~matvecs, 11~s CPU time, error 2.2e$-$06).
The CPU measurements (on a 3GHz Linux PC) are made in Matlab
and thus are only an indication of the actual performance.}
\label{fig:stop}  
\end{figure}

\subsection{Experiments with Krylov-Richardson iteration}
\label{sect:Krylov_tests}
In this section we present some numerical experiments with
the Krylov-Richardson method presented in Section~\ref{sect:Krylov-Richardson}.
We now briefly describe the other methods to which Krylov-Richardson
is compared.

\begin{figure}
\epsfig{file=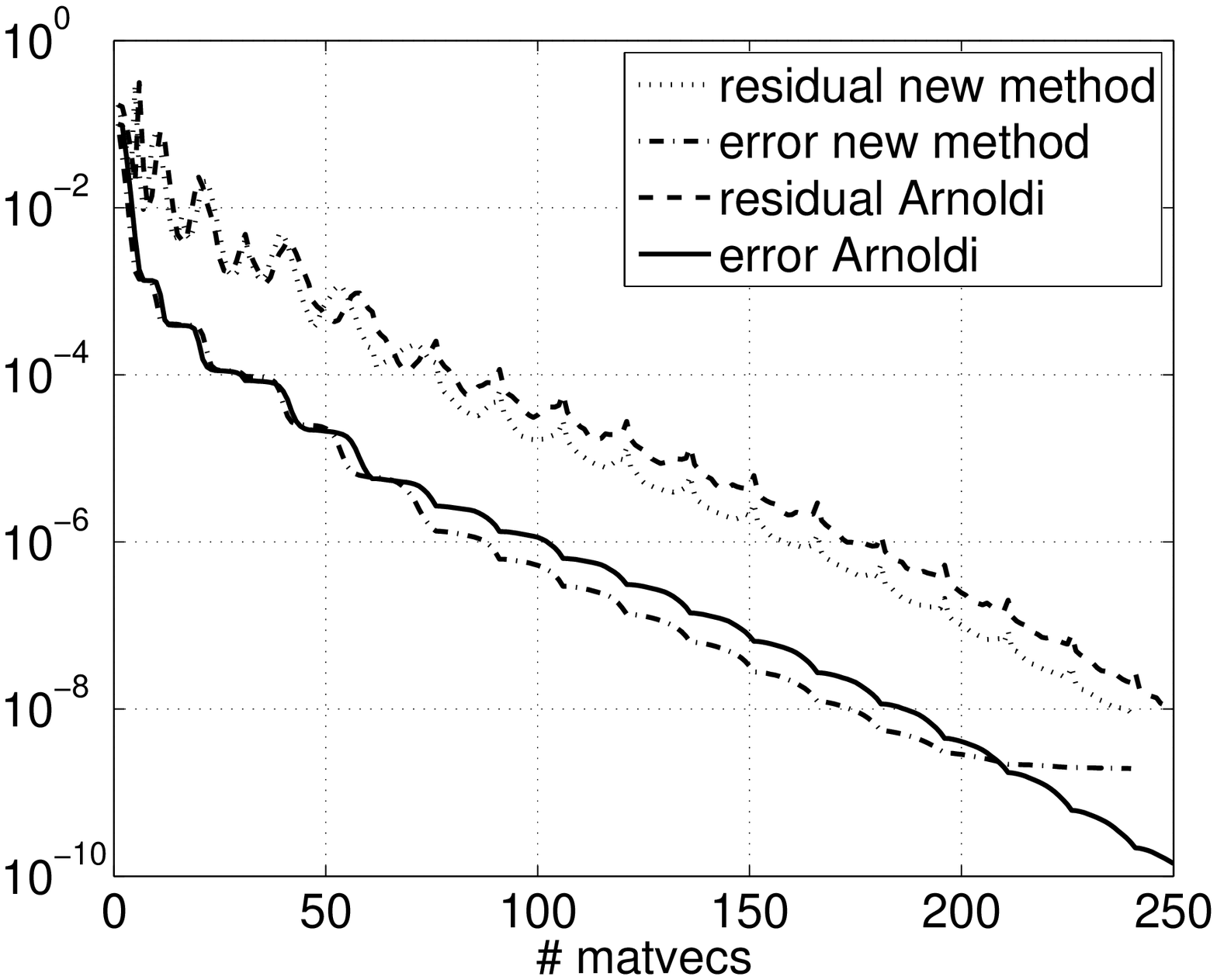,width=0.49\linewidth}%
\epsfig{file=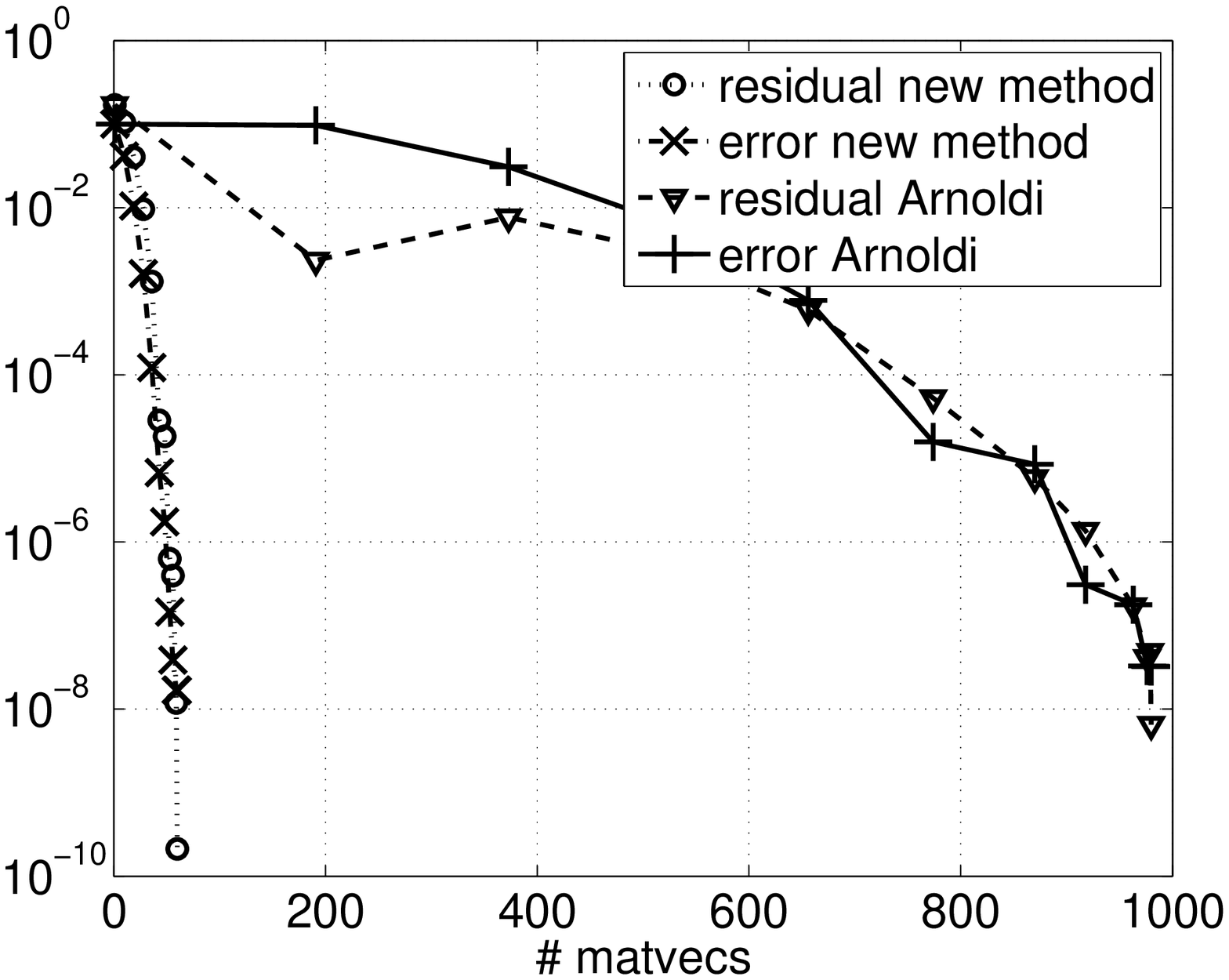,width=0.49\linewidth}
\caption{Convergence plots of the Arnoldi/Lanczos and the new 
Krylov-Richardson methods, mesh $102\times 102$, $\Pe{}=100$.
Left:  restart every 15~steps,
right: SaI strategy with GMRES.
The peaks in the residual plots on the left correspond to the restarts.}
\label{f:main_results}
\end{figure}

Together with the classical
Arnoldi/Lanczos method \cite{GallSaad92,Saad92,DruskinKnizh95,%
HochLub97}, we have tested the SaI method
of Van~den~Eshof and Hochbruck~\cite{EshofHochbruck06}.  We have implemented the method
exactly as described in their paper, with a single modification.  
In particular, as advised by the authors,
in all the tests the shift parameter $\gamma$ is set to $0.1t_{\text{end}}$
and the relaxed stopping criterion strategy for the inner iterative SaI
solvers is employed.  The only thing we have changed is the stopping
criterion of the outer Krylov process.  To be able to exactly compare
the computational work, we can switch from the stopping criterion of Van~den~Eshof
and Hochbruck (based on iteration stagnation) to the residual
stopping criterion (see Lemma~\ref{lemma2}).  
Note that the relaxed strategy for the inner SaI solver is
then also based on the residual norm and not on the error estimate.

Since the Krylov-Richardson method is essentially a restarting technique,
it has to be compared with other existing restarting techniques.
Note that a number of restarting strategies have recently
been developed~%
\cite{TalEzer2007,Afanasjew_ea08,PhD_Guettel,EiermannErnstGuttel09,PhD_Niehoff}.  
We have used the restarting method described
in~\cite{PhD_Niehoff}.  This choice is motivated by the fact that
the method from~\cite{PhD_Niehoff} turns out to be algorithmically very close
to our Krylov-Richardson method.  In fact, the only essential 
difference is handling of the projected problem.  In the method~\cite{PhD_Niehoff}
the projected matrix $H_k$ built up at every restart is appended to
a larger matrix $\widetilde{H}_{*+k}$.  There, the projected matrices from
each restart are accumulated.  Thus, if 10 restarts of 20 steps are
done, we have to compute the matrix exponential of a $200\times 200$
matrix.  In our method, the projected matrices are not accumulated,
so at every restart we deal with a $20\times 20$ matrix.  The price
to pay, however, is the solution of the small IVP~\eqref{ivp_uhat}.

In our implementation, at each Krylov-Richardson iteration the 
IVP~\eqref{ivp_uhat} is solved by the \texttt{ode15s}
ODE solver from \Matlab{}.  To save computational work, it is
essential that the solver be called most of the time with a relaxed 
tolerance (in our code we set the tolerance to 1\% of the
current residual norm).  This is sufficient to estimate
the residual accurately.  Only when the actual solution update takes place
(see formula~\eqref{rich_rst}) do we solve the projected IVP to a 
full accuracy.

Since the residual time dependence in Krylov-Richardson is given
by a scalar function, little storage is needed for the look-up
table.  Based on the required accuracy, the \texttt{ode15s} solver 
automatically determines how many samples need to be stored
(in our experiments this usually did not exceed 300).
This happens at the end of each restart or when
the stopping criterion is satisfied.
Further savings in computational work can be achieved by a
polynomial
fitting: at each restart the newly computed values of the
function $\psi_k$ (see~\eqref{res_rst0}) are approximated by a 
best-fit
polynomial of a moderate degree (in all experiments the degree was
set to~6).  If the fitting error is too large (this 
depends on the required tolerance), the algorithm proceeds
as before.  Otherwise, the $\psi_k$ function
is replaced by its best-fit polynomial.  This allows
a faster solution of the projected IVP~\eqref{ivp_uhat}
through an explicit formula containing the functions
$$
\varphi_k(x)=\frac{\varphi_{k-1}(x)-\varphi_{k-1}(0)}{x},\quad
k\geqs 1,\quad \varphi_0=e^x.
$$

We now present an experiment showing the importance of a proper
stopping criterion.  We compute $\exp(-5A)v$ for $A$ being
the convection-diffusion operator discretized 
on a uniform mesh $102\times 102$ with $\Pe{}=100$.  
The tolerance is set to $\mathtt{toler}=10^{-5}$.
We let the usual Arnoldi method,
restarted every 100~steps, run with the stagnation-based
stopping criterion of~\cite{EshofHochbruck06} and with
the stopping criterion of~\cite{HochLubSel97} based on the generalized 
residual~\eqref{gen_resid}.  We emphasize that the stagnation-based
stopping criterion of~\cite{EshofHochbruck06} is proposed for the
Arnoldi method with SaI strategy and it works, in our limited experience,
very well as soon as SaI is employed.  However, the stagnation-based 
stopping criteria are used in Krylov methods not only
with SaI (see e.g.~\cite{Gautschi2006}) and it is instructive to see possible
implications of it.  Together with Arnoldi, the Krylov-Richardson method
is run with the residual-based stopping criterion.  The 
convergence plots are shown in Figure~\ref{fig:stop}.
As we see, both existing stopping criteria turn out to be 
far from optimal in this test.
With the residual-based stopping criterion, the Arnoldi
method required 438~matvecs and 78~s CPU time to obtain
an adequate accuracy of 4.5e$-$7.

\begin{table}
\caption{Results of the test runs of the Krylov-Richardson 
         and Arnoldi with the residual-based stopping criterion.  
         The CPU times are measured on a 2GHz Mac PC
         (mesh $102\times 102$) and on a 3GHz Linux PC (mesh $402\times 402$).
         We emphasize that the CPU time measurements are made in Matlab and thus
         are only an approximate indication of the actual performance.}
\label{t:main_results}
\begin{center}
\begin{tabular}{ccccc}
\hline\hline
           & restart~/~SaI & total matvecs & CPU      & error\\
           &               & or LU actions & time     &      \\
\hline
\multicolumn{5}{c}{mesh $102\times 102$, $\Pe = 100$}\\\hline
EXPOKIT    & restart 15    & 1343              &  2.2  &  3.61e$-$09\\
Arnoldi    & restart 15    & 250               & 26.4  &  1.45e$-$10\\
new method & restart 15    & 240               &  6.7  &  1.94e$-$09\\
EXPOKIT    & restart 100   & 1020              & 7.6     &1.33e$-$11\\  
Arnoldi    & restart 100   & 167               & 7.9     &1.21e$-$10\\
new method & restart 100   & 168               & 11.8    &1.14e$-$10\\   
Arnoldi    & SaI/GMRES$^a$ & 980 (11 steps)    & 17.8 & 3.29e$-$08\\
new method & SaI/GMRES$^a$ & 60  (10 steps)    & 1.7  & 1.67e$-$08\\
Arnoldi    & SaI/sparse LU & ``11'' (10 steps) & 1.7  & 3.62e$-$09\\
new method & SaI/sparse LU & ``11'' (10 steps) & 1.8  & 1.61e$-$10\\
\hline
\multicolumn{5}{c}{mesh $402\times 402$, $\Pe = 1000$}\\\hline
EXPOKIT    & restart 15     & 1445          &  21     &  4.36e$-$09\\
Arnoldi    & restart 15     & 244           &  11     &  1.13e$-$10\\
new method & restart 15     & 254           &  15     &  2.62e$-$09\\
EXPOKIT    & restart 100    & 1020          &  69     & 1.33e$-$11\\  
Arnoldi    & restart 100    & 202           &  34     & 1.06e$-$10\\
new method & restart 100    & 200           &  35     & 3.62e$-$10\\   
Arnoldi    & SaI/GMRES$^a$  &1147 (15 steps)    &  80  & 5.68e$-$08\\
new method & SaI/GMRES$^a$  & 97  (12 steps)    & 6.2  & 1.28e$-$08\\
Arnoldi    & SaI/sparse LU  & ``12'' (11 steps) &  46  & 3.06e$-$08\\
new method & SaI/sparse LU  & ``13'' (12 steps) &  50  & 2.07e$-$10\\
\hline
\multicolumn{3}{l}{$^a$~GMRES(100) with SSOR preconditioner}
\\\hline
\end{tabular}
\end{center}    
\end{table}

To facilitate a fare comparison between the conventional Arnoldi and
the Krylov-Richardson methods, in all the other tests we use
the residual-based stopping criterion for both methods.
Table~\ref{t:main_results} and Figures~\ref{f:main_results} contain the results of the 
test runs to compute $\exp(-A)v$ for tolerance $\mathtt{toler}=10^{-8}$.
We show the results on two meshes for two different Peclet numbers only, 
the results for other Peclet numbers are quite similar.

The first observation we make is that the CPU times measured in Matlab
seem to favor the EXPOKIT code, disregarding  the actual matvec values.
We emphasize that when the SaI strategy is not used, the main 
computational cost in all the three methods, EXPOKIT, Arnoldi and Krylov-Richardson,
are $k$ steps of the Arnoldi/Lanczos process.  The differences among the 
three methods correspond to the rest of the computational
work, which is $\mathcal{O}(k^3)$, if at least if not too many restarts
are made.

The second observation is that the convergence of the Krylov-Richardson
iteration is essentially the same as of the classical Arnoldi/Lanczos
method.  This is not influenced by the restart value or by the SaI
strategy.  Theoretically, this is to be expected: the former
method applies Krylov for the $\varphi$ function, the latter for
the exponential; for both functions similar convergence estimates
hold, though they are slightly more favorable for the $\varphi$ 
function~\cite{HochLub97}.

When no SaI strategy is applied, the gain we have with Krylov-Richardson
is two-fold.  First, a projected problem of much smaller size has to 
be solved.  This is reflected by the difference in the CPU times of 
Arnoldi and Krylov-Richardson with restart 15 in lines 2 and 3 of the Table:
26.4~s and 6.7~s.  Of course, this effect can be less pronounced
for larger problems or on faster computers---see the corresponding
lines for Arnoldi and Krylov-Richardson with restart 15 on a finer mesh.
Second, we have some freedom in choosing the initial
vector (in standard Arnoldi/Lanczos we must always start
with $v$).  This freedom is not complete because the residual of the 
initial guess has to have scalar dependence on time.  Several 
variants for choosing the initial vector exist, and we will
explore these possibilities in the future.

A significant reduction in total computational work can be achieved
when Kry\-lov-Richard\-son is combined with the SaI strategy.
The gain is then due to the reduction in the number of the inner
iterations (the number of outer iterative steps is approximately 
the same).  In our limited experience, this is not always the case 
but typically takes place
when, for instance, $v$ and $A$ represent discretizations of
a smooth function and a smooth partial differential operator,
respectively.
Currently, we do not completely understand
this behavior.  Apparently, the Krylov subspace vectors
built in the Krylov-Richardson method constitute more favorable
right-hand sides for the inner SaI solvers to converge.  It is
rather difficult to analyze this phenomenon, but we will 
try to do this in the near future.

\subsection{Initial vector and Krylov subspace convergence}
It is instructive to observe dependence of the Krylov subspace methods
on the initial vector $v$.  In particular, if~\eqref{ivp} stems from an
initial-boundary-value problem (IBVP) and $A$ is a discretized
partial differential operator, a faster convergence may take place
for $v$ satisfying boundary conditions of the problem.
Note that for the convection-diffusion 
test problem from the previous
section this effect is not pronounced ($v$ did not satisfy
boundary conditions), probably due to the
jump in the diffusion coefficients.  We therefore demonstrate this
effect on a simple IBVP
\begin{equation}
\label{ibvp}
u_t = \Delta u, \qquad u(x,y,z,0)=u_0(x,y,z),
\end{equation}
posed for $(x,y,z)\in [0,1]^3$ for unknown function $u(x,y,z,t)$ obeying
periodic boundary conditions.    
We use a fourth-order finite volume 
discretization in space from~\cite{VerstappenVeldman2003} on a regular 
mesh $40\times 40\times 40$ and arrive at IVP~\eqref{ivp}
which we solve for $t=1000$ by computing $\exp(-tA)v$.
In Figure~\ref{fig:v} convergence of the Krylov-Richardson and
Arnoldi/Lanczos methods is illustrated for the starting vector
$v$ corresponding to
$$
u_0(x,y,z) = \sin(2\pi x)\sin(2\pi y)\sin(2\pi z) + x(a-x)y(a-y)z(a-z),
$$
with $a=2$ or $a=1$.  In both cases the restart value is set to~100.
The second choice $a=1$ (right plot)
agrees with boundary conditions in the sense that $u_0$ can
be periodically extended and leads to a faster convergence.
The same effect is observed for the Krylov-Richardson and
Arnoldi/Lanczos methods with the SaI strategy, with a reduction
in the number of steps from 12 to 8 or~9.
Remarkably, EXPOKIT(100) converges for both choices of $v$ within
the same number of steps,~306.  Apparently, this is because EXPOKIT splits
the given time interval $[0,t_{\text{end}}]$, building for each
subinterval a new Krylov subspace.

\begin{figure}
\epsfig{file=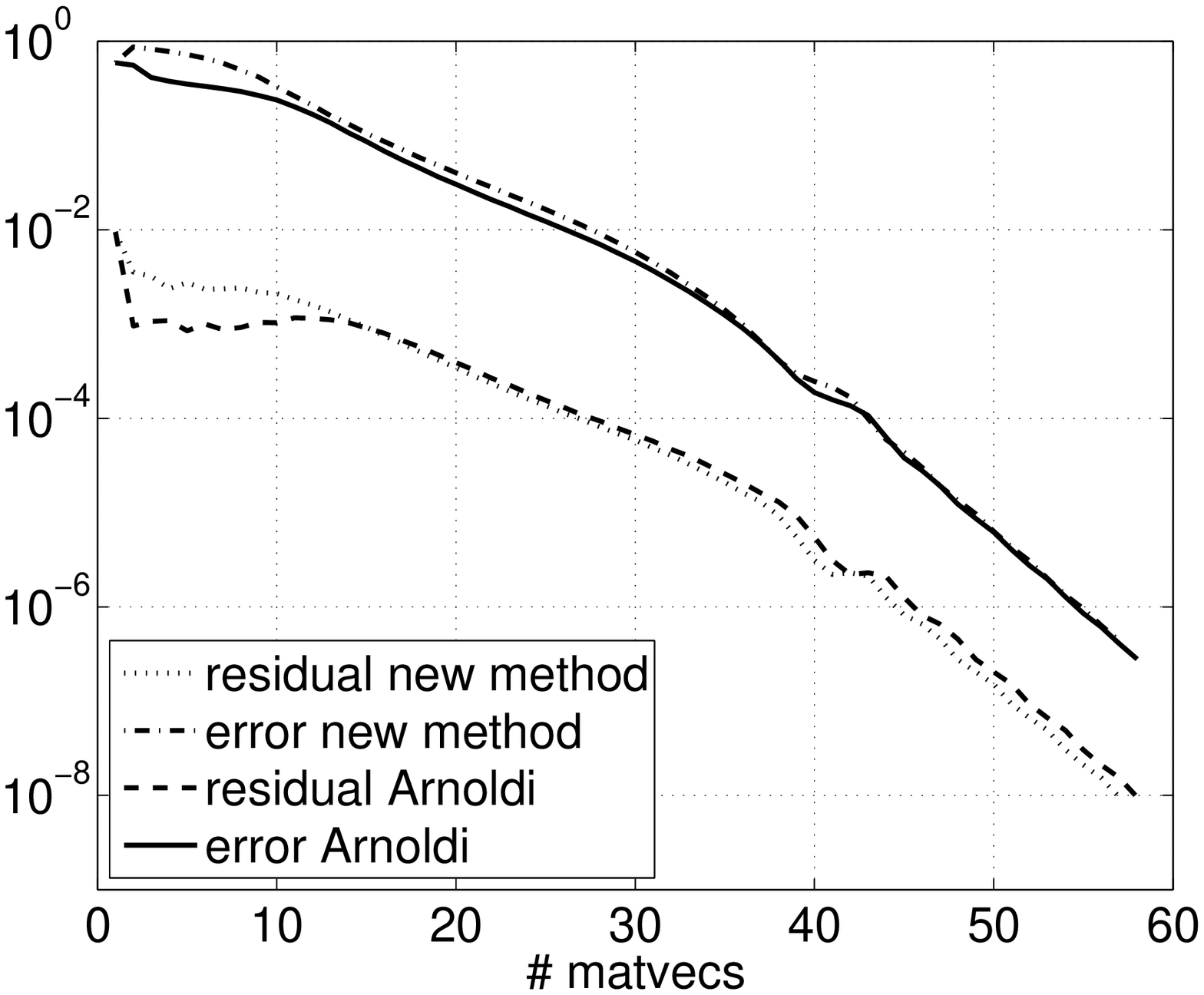,width=0.49\linewidth}%
\epsfig{file=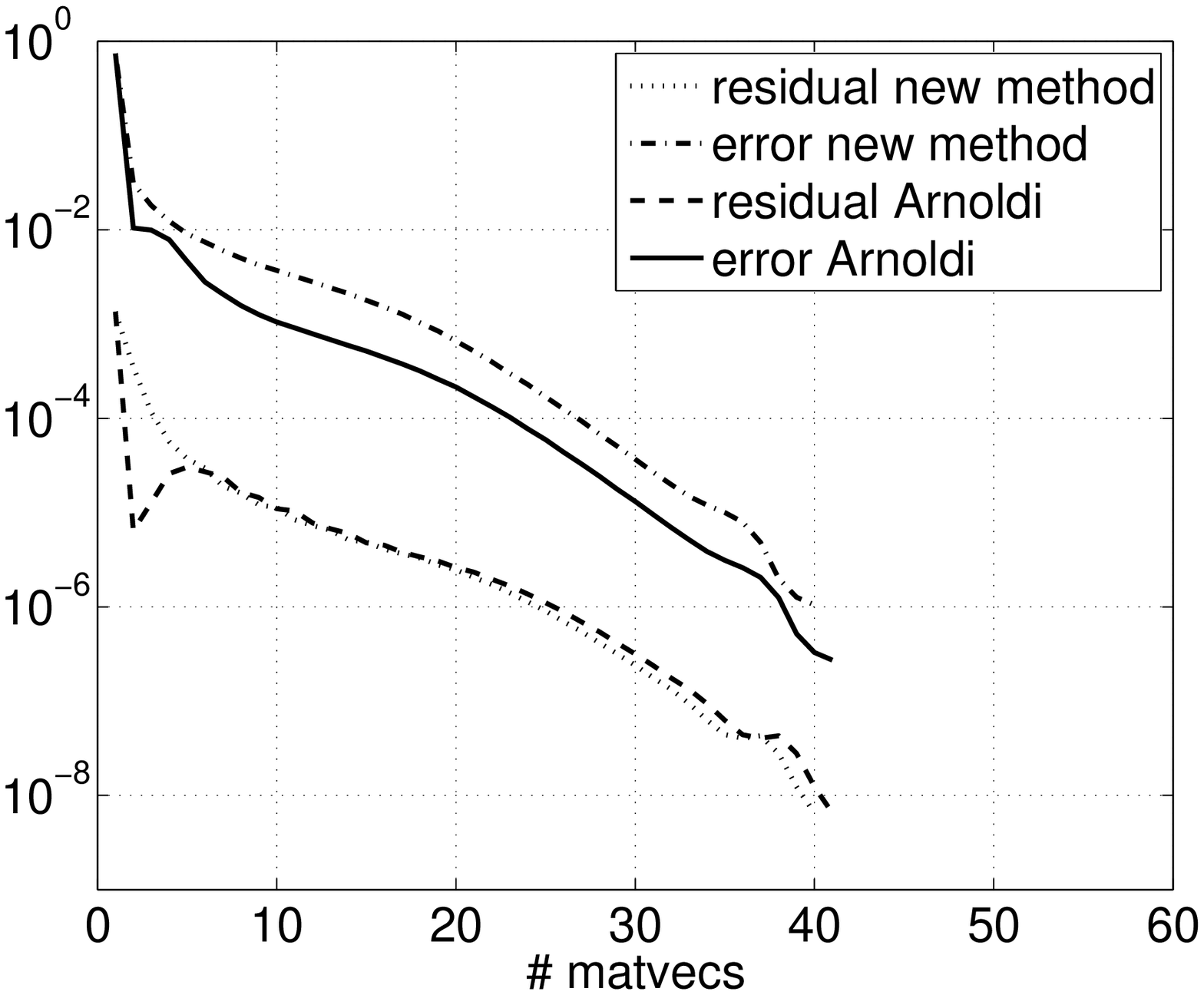,width=0.49\linewidth}
\caption{Convergence plots of the Arnoldi/Lanczos and the new 
Krylov-Richardson methods for the fourth-order finite volume discretization
of the three-dimensional Laplacian with periodic boundary conditions.
Left: the starting vector $v$ disobeys the boundary conditions,
right: $v$ is consistent with the boundary conditions.}
\label{fig:v}
\end{figure}

\section{Concluding remarks and an outlook to further research}
The proposed residual notion appears to provide a reliable stopping
criterion in the iterative methods for computing the matrix exponential.
This is confirmed by the numerical tests and analysis.
Furthermore, the residual concept seems to set up a whole framework for 
a new class of methods for evaluating the matrix exponential.
Some basic methods of this class are proposed in this paper.
Many new research questions arise.  One of them is 
a comprehensive convergence analysis of the new exponential Richardson
and Krylov-Richardson methods.  Another interesting research direction
is development of other residual-based iterative methods.  
In particular, one may ask whether the exponential Richardson
method~\eqref{rich}--\eqref{err_rich} can not be used as a preconditioner 
for the Krylov-Richardson method~\eqref{rich_rst}.  We plan to address this
question in future.

Finally, 
an interesting question is whether the proposed residual notion can be extended
to other matrix functions.  This is possible once a residual equation
can be identified, i.e.\ an equation such that the matrix function
satisfies this equation (see Table~\ref{t1}).  For example, if we are
interested in computing the vector $u=\cos(A)v$,
for given $A\in\Rrnn$ and $v\in\Rr^n$, then we may consider a vector
function $u(t)=\cos(tA)v$, which is a solution of the IVP
$$
u''(t)=-A^2u, \quad u(0)=v, \quad u'(0)=0.
$$
Thus, for an approximate solution $u_k(t)\approx u(t)$ satisfying
the initial conditions, the residual can be introduced as 
$$
r_k(t)\equiv -A^2u_k(t)-u_k''(t).
$$

\section*{Acknowledgments}
The author would like to thank anonymous referees
and a number of colleagues, in particular,
Michael Saunders, Jan Verwer and Julien Langou
for valuable comments and
Marlis Hoch\-bruck for explaining the ideas from~\cite{PhD_Niehoff}.

\bibliography{my_bib,matfun}
\bibliographystyle{abbrv}

\end{document}